\numberwithin{equation}{section}
\newtheorem{theorem}{Theorem}[section]
\newtheorem{lemma}[theorem]{Lemma}
\newtheorem{proposition}[theorem]{Proposition}
\newtheorem{prop}[theorem]{Proposition}
\newtheorem{remark}[theorem]{Remark}
\newtheorem{definition}[theorem]{Definition}
\DeclareMathOperator{\interior}{int}
\DeclareMathOperator{\diverg}{div}
\DeclareMathOperator{\supp}{supp}
\DeclareMathOperator{\kin}{kin}
\newcommand{\T}{\mathbb{T}}
\newcommand{\norm}[1]{\|#1\|}
\newcommand{\N}{\mathbb{N}}
\newcommand{\R}{\mathbb{R}}
\newcommand{\Z}{\mathbb{Z}}
\newcommand{\eps}{\epsilon}
\def\d{{\,\rm d}}
\renewcommand{\eps}{\varepsilon}
\renewcommand{\epsilon}{\varepsilon}
\renewcommand{\phi}{\varphi}
\begin{document}
	
	
	\title{A General Convex Integration Scheme for the Isentropic Compressible Euler Equations}
	
	\author{Tomasz D\k{e}biec\footnotemark[1] \and Jack Skipper\footnotemark[2] \and Emil Wiedemann\footnotemark[2]}
\date{}	
	
	\maketitle
	
	\begin{abstract}
	We prove via convex integration a result that allows to pass from a so-called subsolution of the isentropic Euler equations (in space dimension at least $2$) to exact weak solutions. The method is closely related to the incompressible scheme established by De Lellis--Sz\'ekelyhidi, in particular we only perturb momenta and not densities. Surprisingly, though, this turns out not to be a restriction, as can be seen from our simple characterization of the $\Lambda$-convex hull of the constitutive set. An important application of our scheme will be exhibited in forthcoming work by Gallenm\"uller--Wiedemann.  
		
		
	\end{abstract}
	
	\renewcommand{\thefootnote}{\fnsymbol{footnote}}
	\footnotetext[1]{Sorbonne Universit{\'e}, Inria, CNRS, Universit\'{e} de Paris, Laboratoire Jacques-Louis Lions (LJLL), F-75005 Paris, France. Email: tomasz.debiec@sorbonne-universite.fr}
	
	\footnotetext[2]{Institut f\"ur Angewandte Analysis, Universit\"at Ulm, Helmholtzstra\ss e 18, 89081 Ulm, Germany. Email:
		emil.wiedemann@uni-ulm.de, jwd.skipper@gmail.com}

	\section{Introduction}
	
	We consider the isentropic compressible Euler equations in $\R^d$, $d\geq 2$,
	\begin{equation}\label{eq:compressEulervelocity}
	\begin{aligned}
	\partial_t (\rho u)+\diverg_x\left(\rho u\otimes u\right)+\nabla_x p(\rho)&=0,\\
	\partial_t\rho+\diverg_x (\rho u)&=0,
	\end{aligned}
	\end{equation}
	and with the standard change of variables $m=\rho u $ we have the momentum formulation written
	\begin{equation}\label{eq:compressEuler}
	\begin{aligned}
	\partial_tm+\diverg_x\left(\frac{m\otimes m}{\rho}\right)+\nabla_x p(\rho)&=0,\\
	\partial_t\rho+\diverg_x m&=0,
	\end{aligned}
	\end{equation}
	where $p:[0,\infty)\to\R$ is a non-negative and strictly convex pressure function and we assume $\rho\geq\underline{\rho}>0$ for some constant $\underline{\rho}$. Note that one needs extra conditions to validate the second formulation when $\rho=0$.

In recent years, a lot of activity has arisen related to the ill-posedness of this system in the class of weak solutions. Even when an appropriate entropy condition is taken into account, there may be infinitely many weak solutions emanating from the very same initial data~\cite{admissible, chiodaroli, AkWi}. This is true for certain Riemann data~\cite{CDLK, markkling} and even for smooth data~\cite{CKMS}. In fact, the set of initial data that admits such non-unique solutions is dense in the energy space~\cite{CVY}.
Similar constructions also work for related systems, such as the full compressible Euler system in multiple space dimensions~\cite{CFK, AKKMM, KKMM}.

All these constructions rely on a technique known as {\em convex integration}, introduced to the context of fluid dynamics in the seminal work of De Lellis--Sz\'ekelyhidi~\cite{DLS2009, admissible}. There, an iteration scheme was presented for the incompressible Euler equations that allows to pass from a relaxed solution, also called subsolution, to an infinitude of exact solutions with prescribed energy profile. Since, in many situations, it is much easier to construct a subsolution than to construct an exact solution, this led to several breakthrough results in the existence theory of weak solutions~\cite{Wi11, vortexpaper, euleryoung} and eventually, through a series of refinements of the scheme, to a complete proof of Onsager's Conjecture~\cite{Isett, BDSV}. 

The incompressible convex integration scheme starts off from a subsolution and successively adds to it perturbations in the velocity that oscillate at a higher and higher frequency in each step, thus eventually producing a very irregular limit object that is then shown to be a weak solution of the Euler equations. So far, all known convex integration constructions for \emph{compressible} models recur to the \emph{incompressible} scheme, so that oscillations are added only to the velocity (or momentum) while the other dependent variable, the density, remains fixed along the iteration. This leads to solutions with highly irregular momentum but regular (e.g., $C^1$ or piecewise constant) densities. It has been believed by some, including the current authors, that there should be a more powerful convex integration scheme for the compressible Euler equations that would include oscillations both in the density and the momentum, und thus be genuinely compressible. The hope was that such a scheme would open up further possibilities for the existence theory of weak solutions. 

In this paper, we argue that this hope was not justified. Indeed, as a very simple yet remarkable result we give a characterization of the $\Lambda$-convex hull of the constitutive set for the isentropic Euler system (Theorem~\ref{prop:LambdaConvexHull}). The proof reveals that the representation of any state in the $\Lambda$-convex hull as a $\Lambda$-convex combination of states in the constitutive set is achievable \emph{by momentum oscillations alone}. In other words, oscillations in the density, or any other kind of oscillations, \emph{would not yield more solutions} than those obtained via momentum oscillations alone. Or, to rephrase it once more, the incompressible scheme is fully sufficient as far as convex integration for the isentropic system goes. 

On the positive side, we show two results (Theorems~\ref{prop:subsolutioncondition} and~\ref{prop:subsolutionconditiondens}) on the passage from subsolutions to solutions, closely following the method in~\cite{admissible}. It is shown, indeed, that any subsolution taking values in the $\Lambda$-convex hull of the constitutive set gives rise to exact solutions, the set of which in fact is residual in the Baire sense in a suitable function space of subsolutions. Theorem~\ref{prop:subsolutioncondition} states this `fatness' of the set of solutions in the whole state space, including density, while Theorem~\ref{prop:subsolutionconditiondens} allows the density to be fixed throughout the iteration, so that the density of the obtained solutions coincides with that of the original subsolution. The former result may seem strange, as it leads to solutions with irregular densities, although no density oscillations were involved at all in the construction! In a brief appendix, we exhibit this phenomenon for a toy model which we hope the reader will find equally instructive and amusing.

Some of the ideas employed in this paper have already been exhibited in the dissertation of Simon Markfelder~\cite[Chapter 4]{mark}, who obtains results of a comparable type. However, he only allows for constant generalized pressure (which in~\cite{mark} is denoted $\bar e$, and in the current note $Q$), whereas here, the generalized pressure can be essentially any continuous positive function. It has also been observed by Markfelder that the generalized pressure equals the total energy up to a constant factor precisely in the case of a \emph{monoatomic} gas, i.e., when the pressure takes the form $p(\rho)=\rho^{\frac2d +1}$, where $d$ is the space dimension, so that in this case the convex integration scheme admits control over the energy profile. As far as we can tell, this is a sheer but lucky coincidence.

As indicated above, two steps are involved in the construction of solutions via convex integration: The construction of the subsolution, and the actual iteration scheme. This paper concerns only the latter. Thus, although we argue that our scheme essentially exhausts the possibilities of convex integration in the compressible realm, by no means do we claim that this work supersedes previous ones, because we do not treat the problem of constructing subsolutions for various applications. We remark however that our Theorem~\ref{prop:subsolutionconditiondens} is used in forthcoming work by Gallenm\"uller--Wiedemann on the generability of measure-valued solutions, so that the results of the current note are actually of some use.

\section{Setup and Qualitative Analysis of the Compressible Euler System}
	In this section we describe a relaxation of system~\eqref{eq:compressEuler}, introduce all the necessary geometrical concepts, and prepare the toolbox for constructing wild solutions as described above, see Section~\ref{sec: Main_Proof} for the precise statements of the main resuls.
	
	\medskip
	
	\noindent Let $S^d, S^d_0$ denote the sets of $d\times d$ symmetric and trace-free symmetric matrices, respectively.
	For $m\in \R^d$ let us write
	\begin{equation}
	m\circ m:=m\otimes m-\frac1d |m|^2I_d,
	\end{equation}
	so that $m\circ m$ is always trace-free.
	Following~\cite{CFKW}, we set equations~\eqref{eq:compressEuler} in \emph{Tartar's framework} by relaxing to a linear system with pointwise constraints, by means of the following straightforward lemma.
	\begin{lemma}\label{lemma:reformulation}
		The compressible Euler system~\eqref{eq:compressEuler}
		is equivalent, in the sense of distributions, to the following linear system
		\begin{equation}\label{eq:relaxedsystem}
		\begin{aligned}
		\partial_t m + \diverg_x M + \nabla_x Q &= 0,\\
		\partial_t \rho + \diverg_x m &= 0,
		\end{aligned}
		\end{equation}
		with the pointwise constraints
		\begin{equation}\label{eq:constraints}
		M = \frac{m\circ m}{\rho},\;\;\;\; Q = p(\rho) + \frac1d\frac{|m|^2}{\rho}.
		\end{equation}
	\end{lemma}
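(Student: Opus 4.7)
The plan is a direct algebraic verification. The density equation $\partial_t\rho+\diverg_x m=0$ appears identically in both systems, so nothing needs to be said about it. Hence the entire content of the lemma reduces to showing that, when $M$ and $Q$ are defined by~\eqref{eq:constraints}, one has the pointwise (and thus distributional) identity
\begin{equation}
\diverg_x M+\nabla_x Q=\diverg_x\!\left(\frac{m\otimes m}{\rho}\right)+\nabla_x p(\rho).
\end{equation}
First I would unpack $M=\frac{m\otimes m}{\rho}-\frac{1}{d}\frac{|m|^2}{\rho}I_d$, and use that the divergence of a scalar multiple of the identity matrix is simply the gradient of that scalar, i.e.\ $\diverg_x\bigl(\frac{|m|^2}{d\rho}I_d\bigr)=\nabla_x\bigl(\frac{|m|^2}{d\rho}\bigr)$. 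Then
\begin{equation}
\diverg_x M=\diverg_x\!\left(\frac{m\otimes m}{\rho}\right)-\nabla_x\!\left(\frac{1}{d}\frac{|m|^2}{\rho}\right),
\end{equation}
and adding $\nabla_x Q=\nabla_x p(\rho)+\nabla_x\bigl(\frac{1}{d}\frac{|m|^2}{\rho}\bigr)$ produces the desired cancellation.

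Since both sides of this identity can be read as distributional derivatives of $L^1_{\loc}$ objects (using $\rho\geq\underline{\rho}>0$ to justify that $m\otimes m/\rho$, $|m|^2/\rho$, and $p(\rho)$ are locally integrable for any reasonable class of $m,\rho$), the equivalence passes directly to the sense of distributions: pairing with a test function and integrating by parts on either side yields the same weak formulation. The converse direction is immediate by inspection---any $(m,\rho,M,Q)$ solving~\eqref{eq:relaxedsystem} pointwise a.e.\ subject to~\eqref{eq:constraints} yields a solution of~\eqref{eq:compressEuler} by substituting the constraints back in. There is no real obstacle here; the only thing one should take a moment to notice is the slightly non-obvious choice to split off the trace of $m\otimes m/\rho$ and absorb it into $Q$, which is precisely the point that makes $M$ land in $S^d_0$ and sets up the Tartar framework used in later sections.
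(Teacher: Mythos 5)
Your verification is correct and is exactly the ``straightforward'' computation the paper has in mind (the lemma is stated without proof there): the only content is the pointwise identity $M+QI_d=\frac{m\otimes m}{\rho}+p(\rho)I_d$ under the constraints, which makes the two weak formulations agree term by term. Nothing to add.
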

	
	\noindent Hence, the state space now consists of the variables $z=(\rho, m, M, Q)\in \R^{N}$ with $N=\left(1+\frac d2\right)(d+1)$ (recall that $M$ is symmetric and trace-free) and $\R^N\simeq\R\times\R^d\times S_0^d\times\R$. Set
	\begin{equation}
	K=\left\{(\rho, m, M, Q)\in \R^{N}:\quad \rho > 0,\quad M=\frac{m\circ m}{\rho},\quad Q=p(\rho)+\frac{|m|^2}{\rho d}\right\}.
	\end{equation}

	\noindent Recall that the wave cone $\Lambda$ for~\eqref{eq:relaxedsystem} is the set of those non-zero $\bar z\in\R^N$ for which there exists a direction $\xi\in\R^{d+1}\backslash\{0\}$ such that $ z(t,x) =\bar z h((t,x)\cdot\xi)$ is a solution to~\eqref{eq:relaxedsystem}, for any choice of profile function $h:\R\to\R$. It was shown in~\cite{CFKW} that the wave cone for~\eqref{eq:relaxedsystem}  can be characterized as the set of $z=(\rho, m, M, Q)$ for which
	\begin{equation}
	\det\begin{pmatrix}
	m & M+QI_d\\
	\rho & m
	\end{pmatrix}=0.
	\end{equation}
	
	\noindent We shall now study the $\Lambda$-convex hull of $K$ with respect to this cone, and see that it is ``large'' enough so that we can obtain many solutions of the linear system~\eqref{eq:relaxedsystem}.
	For this, we first write 
	\begin{equation}
	e_{\kin}(\rho,m,M):=\frac d2\lambda_{max}\left(\frac{m\otimes m}{\rho}-M\right),
	\end{equation}
	similarly to the notation in~\cite{admissible}, where $\lambda_{max}(A)$ denotes the largest eigenvalue of $A$. The following lemma gathers some crucial properties of this ``generalised kinetic energy density''. It is a straightforward and minor modification of Lemma 3.2 in~\cite{admissible}.
	
	\begin{lemma}\label{lemma:ekinProperties}
		We have the following properties:
		\begin{enumerate}
			\item $e_{\kin}:\R^+\times  \R^d\times S^d_0\to \R$ is convex.
			\item $\frac{1}{2}\frac{|m|^2}{\rho}\le e_{\kin}(\rho,m,M)$, with equality if and only if $M=\frac{m\otimes m}{\rho}-\frac{1}{d}\frac{|m|^2}{\rho}I_d$.
			\item $|M|_\infty\le 2\frac{d-1}{d}e_{\kin}(\rho,m,M)$, where $|M|_\infty$ denotes the operator norm of the matrix.
			\item Let $\rho > 0$ be fixed. Then the convex hull of the set 
			\begin{equation}
			L_{\rho,r}\coloneqq\left\{ (m,M)\in\R^d\times S^d_0\;\colon\;\;\; |m|=r,\;\;\;(\rho,m,M,p(\rho)+\frac{|m|^2}{\rho d})\in K \right\}
			\end{equation}
			is given by
			\begin{equation}
			\left\{ (m,M) \in\R^d\times S^d_0\;\colon\;\;\; e_{\kin}(\rho,m,M)\le \frac{r^2}{2\rho}\right\}.
			\end{equation}
			In particular, the smallest value of $r$ such that $(m,M)\in (L_{\rho,r})^{co}$ is given by $\sqrt{2\rho e_{\kin} (\rho,m,M)}$.
		\end{enumerate}
	\end{lemma}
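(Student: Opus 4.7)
The plan is to dispatch parts (1)--(3) by direct calculations based on the variational representation $\lambda_{\max}(A) = \sup_{|\xi|=1}\xi\cdot A\xi$ and the trace-free condition on $M$. Writing
\begin{equation}
    e_{\kin}(\rho, m, M) = \frac{d}{2}\sup_{|\xi|=1}\left(\frac{(m\cdot\xi)^2}{\rho} - \xi\cdot M\xi\right),
\end{equation}
each function inside the supremum is convex in $(\rho, m, M)$: the $(\rho, m)$-dependence is the perspective of the convex map $m \mapsto (m\cdot\xi)^2$, and the $M$-dependence is linear, yielding (1). For (2), $\trace M = 0$ gives $\trace\bigl(\tfrac{m\otimes m}{\rho} - M\bigr) = |m|^2/\rho$, and $\lambda_{\max} \ge \trace/d$ yields $e_{\kin} \ge |m|^2/(2\rho)$; equality forces all eigenvalues of $\tfrac{m\otimes m}{\rho} - M$ to coincide, pinning the matrix down as $\tfrac{|m|^2}{d\rho}I_d$. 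For (3), the trace-free property of $M$ yields $\lambda_{\max}(M) \le (d-1)|\lambda_{\min}(M)|$ and the symmetric estimate, while Weyl's inequality applied to $-M = \bigl(\tfrac{m\otimes m}{\rho} - M\bigr) - \tfrac{m\otimes m}{\rho}$, combined with $\lambda_{\max}\bigl(-\tfrac{m\otimes m}{\rho}\bigr) = 0$, shows $|\lambda_{\min}(M)| \le \tfrac{2}{d}e_{\kin}$; the two estimates combine to give $|M|_\infty \le \tfrac{2(d-1)}{d}e_{\kin}$.

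For part (4), denote the sublevel set on the right by $C_{\rho, r}$. The inclusion $(L_{\rho, r})^{co} \subseteq C_{\rho, r}$ is immediate: on $L_{\rho, r}$ one has $|m|=r$ and $M = (m\circ m)/\rho$, so $\tfrac{m\otimes m}{\rho} - M = \tfrac{r^2}{d\rho}I_d$ and $e_{\kin} = r^2/(2\rho)$; convexity of $C_{\rho,r}$ comes from (1). The reverse inclusion is the main obstacle. My plan is to recast it as a truncated moment problem on the sphere $S_r := \{m' \in \R^d : |m'|=r\}$: by Carath\'eodory, $(m, M) \in (L_{\rho, r})^{co}$ iff there is a finitely supported probability measure $\mu$ on $S_r$ with first moment $m$ and second moment $\Sigma := \rho M + (r^2/d) I_d$. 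A short manipulation gives $\Sigma - m\otimes m = \tfrac{r^2}{d}I_d - \rho\bigl(\tfrac{m\otimes m}{\rho} - M\bigr)$, so $\Sigma - m\otimes m \succeq 0$ is equivalent to $(m, M) \in C_{\rho, r}$; together with $\trace\Sigma = r^2$ (from $\trace M = 0$), all obvious necessary conditions on the prescribed moments are satisfied.

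To produce $\mu$ explicitly, the case $|m| = r$ is trivial, since (2) then forces $(m, M) \in L_{\rho, r}$. For $|m| < r$, diagonalize $\Sigma - m\otimes m = \sum_{i=1}^d \sigma_i v_i \otimes v_i$ in an orthonormal basis with $\sigma_i \ge 0$ and $\sum_i \sigma_i = r^2 - |m|^2$. For each $i$ with $\sigma_i > 0$, solve the quadratic $|m + t v_i|^2 = r^2$ to obtain two roots $t_i^\pm$ of opposite sign, giving points $m_i^\pm := m + t_i^\pm v_i \in S_r$; the two-atom measure $\mu_i$ on $\{m_i^+, m_i^-\}$ with weights chosen so its mean equals $m$ has covariance $(r^2 - |m|^2)\, v_i \otimes v_i$. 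Setting $\mu := \sum_i \tfrac{\sigma_i}{r^2 - |m|^2}\,\mu_i$ then yields a probability measure with mean $m$ and covariance $\Sigma - m\otimes m$, hence second moment $\Sigma$, realizing $(m, M)$ as a convex combination of at most $2d$ points of $L_{\rho, r}$. The ``in particular'' statement is then immediate from the equivalence $(m, M) \in (L_{\rho, r})^{co} \iff r^2 \ge 2\rho\, e_{\kin}(\rho, m, M)$.
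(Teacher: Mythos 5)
Your argument is correct. The paper itself gives no proof of this lemma --- it defers to Lemma~3.2 of De~Lellis--Sz\'ekelyhidi \cite{admissible}, of which the present statement is a ``straightforward and minor modification''. Your treatment of parts (1)--(3) coincides with the standard argument there: convexity via the variational formula for $\lambda_{\max}$ and the perspective construction, the trace identity $\trace\bigl(\tfrac{m\otimes m}{\rho}-M\bigr)=|m|^2/\rho$ for (2), and the combination of Weyl's inequality with the trace-free bound $\lambda_{\max}(M)\le(d-1)|\lambda_{\min}(M)|$ for (3). Where you genuinely diverge is part (4): the De~Lellis--Sz\'ekelyhidi proof of the reverse inclusion is non-constructive, invoking Krein--Milman for the compact convex sublevel set and then showing that any point with $|m|<r$ or $M\neq m\circ m/\rho$ admits a nontrivial line segment through it inside the set, hence is not extreme. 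You instead recast membership in $(L_{\rho,r})^{co}$ as a truncated moment problem on the sphere $|m'|=r$ and exhibit an explicit representing measure with at most $2d$ atoms, built from two-point measures along the eigendirections of the covariance $\Sigma-m\otimes m$. I checked the key computations: $\Sigma-m\otimes m=\tfrac{r^2}{d}I_d-\rho\bigl(\tfrac{m\otimes m}{\rho}-M\bigr)\succeq 0$ is indeed equivalent to $e_{\kin}\le r^2/(2\rho)$, the roots $t_i^\pm$ have product $|m|^2-r^2<0$ so the weights are positive, and each $\mu_i$ has covariance $-t_i^+t_i^-\,v_i\otimes v_i=(r^2-|m|^2)v_i\otimes v_i$, so the mixture has the prescribed second moment. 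Your route is more quantitative (an explicit bound on the number of atoms, useful if one ever needs effective decompositions), at the cost of being slightly longer than the extreme-point argument; both are valid.
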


	\subsection{The Geometry}

	We shall now construct ``admissible line segments'' in the momentum and density directions. These will be used later to split the plane-wave solutions of the relaxed system into the conventional constant pressure directions, where the density stays the same, and suitable density directions. 
	Firstly, we observe that, for fixed $\rho$, we can oscillate the value of the momentum variable. Recall that $L_{\rho,r}$ denotes the slice of $K_r$ with fixed density.
	
	\begin{lemma}\label{lemma:admissiblesegments}
		Let $z=(\rho, m, M, Q)$ such that $(m,M)\in\interior(L_{\rho,r})^{co}$. There exist $\bar m\in\R^d$ and $\bar M\in S_0^d$ such that the ``momentum oscillation segment''
		\begin{equation}
		\sigma_m=\left[(\rho, m - \bar m, M - \bar M, Q), (\rho, m + \bar m, M + \bar M, Q)\right]
		\end{equation}
		satisfies $\left[(m - \bar m, M - \bar M), (m + \bar m, M + \bar M)\right]\in \interior(L_{\rho,r})^{co}$, and is parallel to the vector
		\begin{equation}
		\left(0, a, \frac{a\otimes a}{\rho}, 0\right) -\left(0, b, \frac{b\otimes b}{\rho}, 0\right), 
		\end{equation}   
		for some $a,b\in \R^d$ with $|a|=|b|=r$ and $b\neq \pm a$.
		Moreover 
		\begin{equation}
		|\bar m|\ge \frac{C}{r}(r^2 - |m|^2).
		\end{equation}
	\end{lemma}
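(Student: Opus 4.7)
The plan is to parametrize points on the $r$-sphere as $a = c + \eta$, $b = c - \eta$ with $c, \eta \in \R^d$, $c \perp \eta$, and $|c|^2 + |\eta|^2 = r^2$; this automatically gives $|a| = |b| = r$, $a - b = 2\eta$, and $a \otimes a - b \otimes b = 2(c \otimes \eta + \eta \otimes c)$, which is trace-free precisely because $c \perp \eta$. I will seek the perturbation in the form $\bar m = 2\alpha \eta$, $\bar M = 2\alpha(c \otimes \eta + \eta \otimes c)/\rho$ for some $\alpha > 0$, and choose $\eta$, $c$, and $\alpha$ so that both endpoints $(m \pm \bar m, M \pm \bar M)$ satisfy $e_{\kin}(\rho, m \pm \bar m, M \pm \bar M) < r^2/(2\rho)$, which by Lemma~\ref{lemma:ekinProperties}(4) is exactly the condition for $(m \pm \bar m, M \pm \bar M) \in \interior(L_{\rho,r})^{co}$.

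Wave-cone admissibility is immediate: the plane-wave ansatz with frequency vector $\xi = \bigl(-(r^2 + a \cdot b)/\rho,\, a + b\bigr) \in \R \times \R^d$ solves the linear system~\eqref{eq:relaxedsystem}, since the continuity equation reduces to $(a - b) \cdot (a + b) = |a|^2 - |b|^2 = 0$ and the momentum equation follows from the identity $(a \otimes a - b \otimes b)(a + b) = (r^2 + a \cdot b)(a - b)$; the hypothesis $b \neq -a$ guarantees $a + b \neq 0$, hence $\xi \neq 0$. Writing $A := m \otimes m/\rho - M$ and expanding directly,
\begin{equation*}
\frac{(m \pm \bar m) \otimes (m \pm \bar m)}{\rho} - (M \pm \bar M) = A + B_\pm, \quad B_\pm = \frac{\pm 2\alpha\bigl[(m-c) \otimes \eta + \eta \otimes (m-c)\bigr] + 4\alpha^2 \eta \otimes \eta}{\rho}.
\end{equation*}
To simplify $B_\pm$, I take $c$ to be the orthogonal projection of $m$ onto $\eta^\perp$, so that $m - c = (m \cdot \hat\eta)\hat\eta$ with $\hat\eta := \eta/|\eta|$; the sphere constraint then reads $|\eta|^2 = r^2 - |m|^2 + (m \cdot \hat\eta)^2 > 0$, and $B_\pm$ collapses to the rank-one matrix $B_\pm = \tfrac{|\bar m|(\pm 2(m \cdot \hat\eta) + |\bar m|)}{\rho}\, \hat\eta \otimes \hat\eta$, where $|\bar m| = 2\alpha|\eta|$.

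It remains to select the unit direction $\hat\eta$ and the parameter $\alpha$. Because $B_\pm$ is rank one in direction $\hat\eta$ and I will take $\hat\eta$ to be an eigenvector of $A$, the eigenvalues of $A + B_\pm$ on $\hat\eta^\perp$ coincide with the remaining eigenvalues of $A$ and therefore stay strictly below $r^2/(d\rho)$. Combining the two sign choices, the binding endpoint condition reads $|\bar m|^2 + 2|m \cdot \hat\eta|\, |\bar m| < r^2/d - \rho\, \hat\eta^T A \hat\eta$. I choose $\hat\eta$ to be a unit eigenvector of $A$ associated with the smallest eigenvalue $\lambda_{\min}(A)$, with a harmless small perturbation in the exceptional case $\hat\eta \parallel m$ (which would otherwise force $c = 0$ and $b = -a$). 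Since $A$ is symmetric with trace $|m|^2/\rho$, one has $\lambda_{\min}(A) \leq |m|^2/(d\rho)$, so that $r^2/d - \rho \lambda_{\min}(A) \geq (r^2 - |m|^2)/d$. Solving the quadratic inequality and bounding its denominator using $|m \cdot \hat\eta| \leq |m| \leq r$ yields
\begin{equation*}
|\bar m|_{\max} \geq \frac{(r^2 - |m|^2)/d}{|m \cdot \hat\eta| + \sqrt{(m \cdot \hat\eta)^2 + (r^2 - |m|^2)/d}} \geq \frac{C}{r}\bigl(r^2 - |m|^2\bigr)
\end{equation*}
for a constant $C = C(d) > 0$; taking $|\bar m|$ to be, say, half this maximal value secures both strict interiority and the required lower bound.

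The main obstacle is this final quantitative step: aligning $\hat\eta$ with a minimum-eigenvalue direction of $A$ is essential, because the more natural symmetric choice $a = m + w$, $b = m - w$ with $w \perp m$ only delivers $|\bar m| \lesssim \sqrt{r^2 - 2\rho e_{\kin}(\rho, m, M)}$, a bound that can be arbitrarily small when $e_{\kin}$ is close to $r^2/(2\rho)$ even if $(r^2 - |m|^2)/r$ is of order $r$. The trace identity forcing $\lambda_{\min}(A) \leq |m|^2/(d\rho)$---a quantitative form of Lemma~\ref{lemma:ekinProperties}(2)---is what converts the endpoint constraint into an inequality whose right-hand side has the correct size $(r^2 - |m|^2)/d$.
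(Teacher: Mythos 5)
Your argument is correct, but it takes a genuinely different route from the paper's. The paper adapts \cite[Lemma~6]{admissible}: it writes $(m,M)$ as a Carath\'eodory convex combination $\sum_{i=1}^{N+1}\lambda_i\,(m_i,\tfrac{m_i\circ m_i}{\rho})$ of points of $L_{\rho,r}$, orders the weights so that $\lambda_1$ is maximal, and takes the segment in the direction $\tfrac12\lambda_j(\underline z_j-\underline z_1)$ for the index $j$ maximizing $\lambda_j|m_j-m_1|$; interiority follows because the endpoints are again convex combinations with positive weights, and the quantitative bound comes from $|m-m_1|\ge r-|m|$ combined with the pigeonhole estimate $|\bar m|\ge\tfrac1{2N}|m-m_1|$. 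You instead build the pair $a=c+\eta$, $b=c-\eta$ explicitly and verify interiority of the perturbed states by an eigenvalue computation. Your two key devices --- aligning $\hat\eta$ with a minimal eigenvector of $A=m\otimes m/\rho-M$ so that $B_\pm$ only moves the smallest eigenvalue, and using $\trace A=|m|^2/\rho$ to get $\lambda_{\min}(A)\le |m|^2/(d\rho)$ and hence a right-hand side of size $(r^2-|m|^2)/d$ --- are sound, and your identification of $\interior(L_{\rho,r})^{co}$ with $\{e_{\kin}<r^2/(2\rho)\}$ is justified since $e_{\kin}$ is convex and continuous. The one informal spot is the degenerate case: when $\hat\eta\parallel m$, and also when $m=0$ (where any choice of $\hat\eta$ forces $c=0$ and hence $b=-a$), you need the perturbation to act on $c$ rather than on $\hat\eta$; in either case continuity of $\lambda_{max}$ makes the fix harmless, as you indicate, but it deserves a sentence. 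As for what each approach buys: the paper's proof is shorter and imports a standard lemma essentially verbatim, whereas yours is self-contained, produces an explicit admissible direction and an explicit constant $C(d)$, and makes transparent (via your closing remark) why the lower bound can be taken of size $(r^2-|m|^2)/r$ rather than degenerating when $e_{\kin}$ is close to $r^2/(2\rho)$.
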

	
	\begin{proof}
		We can adapt the geometric construction from~\cite[Lemma~6]{admissible}:
		 Indeed, the point $\underline{z}=(m,M)$ can be expressed as a finite convex combination of elements of $L_{\rho,r}$, i.e.,
		\begin{equation}
		\underline{z} = \sum\limits_{i=1}^{N+1}\lambda_i \underline{z}_i,
		\end{equation}
		where $\lambda_i\in(0,1)$, $\sum\lambda_i=1$, 
		\begin{equation}
		\underline{z}_i = \left(m_i,\frac{m_i\circ m_i}{\rho}\right),
		\end{equation}
		and $|m_i|=r$. By a slight perturbation, if necessary, we can ensure $m_i\neq -m_j$ for all $i,j\leq N+1$. Furthermore, we assume that the coefficients are ordered, so that $\lambda_1 = \max\limits_{1\leq i\leq N+1}\lambda_i$. 
		Since
		\begin{equation}
		\underline{z} - \underline{z}_1 = \sum\limits_{i=2}^{N+1}\lambda_i(\underline{z}_i-\underline{z}_1),
		\end{equation}
		we have
		\begin{equation}
		|m-m_1| \leq N \max_{2\leq i\leq N+1}\lambda_i|m_i-m_1|.
		\end{equation}
		Let $j>1$ be such that $\lambda_j|m_j-m_1| = \max\limits_{2\leq i\leq N+1}\lambda_i|m_i-m_1|$, and let
		\begin{equation}
		(\bar m,\bar M) = \frac12\lambda_j(\underline{z}_j-\underline{z}_1) = \frac12\lambda_j(m_j-m_1,\frac{m_j\circ m_j}{\rho}-\frac{m_1\circ m_1}{\rho}).
		\end{equation}
		Observe that, since $|m_i| = r$ for each $i$, we have
		\begin{equation}
		\frac{m_j\circ m_j}{\rho}-\frac{m_1\circ m_1}{\rho} = \frac{m_j\otimes m_j}{\rho}-\frac{m_1\otimes m_1}{\rho}.
		\end{equation}
		Now the line segment $\sigma_m$, defined by
		\begin{equation}
		\sigma_m\coloneqq \left[(\rho,\underline{z} - \frac{\lambda_j}{2}(\underline{z}_j-\underline{z}_1),Q),(\rho, \underline{z} + \frac{\lambda_j}{2}(\underline{z}_j-\underline{z}_1),Q)\right]
		\end{equation}
		satisfies the desired properties.
		Moreover, by construction we have the estimate
		\begin{equation}
		|\bar m| \geq \frac{1}{2N}|m-m_1| \geq \frac{1}{2N}(r-|m|) \geq \frac{1}{2N}\frac{(r-|m|)(r+|m|)}{2r} = \frac{1}{4rN}(r^2-|m|^2).
		\end{equation}
	\end{proof}

	We now prove the following characterization of the 
	$\Lambda$-convex hull of $K$ with respect to the wave cone $\Lambda$. This characterization is not explicitly used in the sequel, but it helps motivate the definition of subsolution later on, and is interesting as it shows that the results of Section~\ref{sec: Main_Proof} are in a sense optimal (cf.\ the introduction).
	
	\begin{theorem}
		\label{prop:LambdaConvexHull}
		We have the following identity:
		\begin{equation}
		K^\Lambda=\left\{z\in\R^N: \quad\rho>0,\quad M\in S_0^d, \quad Q\geq p(\rho)+\frac 2d e_{kin}(\rho,m,M)\right\}.
		\end{equation}
	\end{theorem}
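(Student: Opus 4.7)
My plan is to prove the claimed identity by a two-sided inclusion, with the easy direction coming from convexity and the harder direction from an iterative application of the geometric Lemma~\ref{lemma:admissiblesegments}.

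For $K^\Lambda\subseteq\{z:\rho>0,\;M\in S_0^d,\;Q\geq p(\rho)+\tfrac{2}{d}e_{\kin}(\rho,m,M)\}$, I would introduce $F(\rho,m,M,Q):=p(\rho)+\tfrac{2}{d}e_{\kin}(\rho,m,M)-Q$, which is convex on $\R^+\times\R^d\times S_0^d\times\R$ by Lemma~\ref{lemma:ekinProperties}(1) and the convexity of $p$. A direct calculation using Lemma~\ref{lemma:ekinProperties}(2) shows $F\equiv 0$ on $K$: on $K$ we have $M=m\circ m/\rho$, hence $e_{\kin}(\rho,m,M)=\tfrac{1}{2}|m|^2/\rho$, and thus $F=p(\rho)+|m|^2/(\rho d)-Q=0$. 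Since $K^\Lambda\subseteq K^{co}$ and $F$ is convex with $F\le 0$ on $K$, the bound $F\leq 0$ passes to all of $K^\Lambda$.

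For the reverse inclusion, fix $z=(\rho,m,M,Q)$ satisfying $\rho>0$ and $Q\geq p(\rho)+\tfrac{2}{d}e_{\kin}(\rho,m,M)$, and set $r:=\sqrt{d\rho(Q-p(\rho))}$. Using closedness of $K^\Lambda$, I may assume the inequality is strict, so by Lemma~\ref{lemma:ekinProperties}(4), $(m,M)\in\interior(L_{\rho,r})^{co}$. The crucial observation is that any point $(\rho,\hat m,\hat m\circ\hat m/\rho,Q)$ with $|\hat m|=r$ lies in $K$, since $Q=p(\rho)+r^2/(\rho d)$. To feed Lemma~\ref{lemma:admissiblesegments}, I would first check that for $|a|=|b|=r$ with $a\neq\pm b$, the direction $\bigl(0,a-b,(a\otimes a-b\otimes b)/\rho,0\bigr)$ lies in $\Lambda$: indeed, picking any $\eta\neq 0$ perpendicular to $a-b$ (possible since $d\geq 2$) and setting $\tau:=-(a\cdot\eta)/\rho$ solves the plane-wave system for~\eqref{eq:relaxedsystem}, since $(a-b)\cdot\eta=0$ forces $a\cdot\eta=b\cdot\eta$ and hence $\bar M\eta=(a-b)(a\cdot\eta)/\rho$. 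Thus Lemma~\ref{lemma:admissiblesegments} exhibits $z$ as a $\Lambda$-convex combination $z=\tfrac12 z^++\tfrac12 z^-$ with both endpoints still in $\interior(L_{\rho,r})^{co}$ at the same $\rho$ and $Q$.

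Iterating this binary splitting produces a dyadic tree of $\Lambda$-convex combinations whose leaves lie in $(L_{\rho,r})^{co}$. The quantitative estimate $|\bar m|\geq\tfrac{C}{r}(r^2-|m|^2)$ forces the momentum magnitudes at the leaves to drift towards $r$; and once $|m|=r$, Lemma~\ref{lemma:ekinProperties}(2) combined with $e_{\kin}\leq r^2/(2\rho)$ forces $M=m\circ m/\rho$, so the leaves converge into $L_{\rho,r}\subset K$. This realises $z$ as a limit of iterated $\Lambda$-convex combinations of points in $K$, hence $z\in K^\Lambda$. The main obstacle I anticipate is precisely this last step: organising the dyadic iteration so that the leaves genuinely accumulate on $K$ and exhibit $z$ as a bona fide element of the $\Lambda$-convex hull. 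This is the standard laminate-type construction from De~Lellis--Sz\'ekelyhidi~\cite{admissible}, and the estimate built into Lemma~\ref{lemma:admissiblesegments} is precisely what is needed to carry it through in the compressible setting.
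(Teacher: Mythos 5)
Your proposal is correct and follows essentially the same route as the paper: the inclusion $\subseteq$ via convexity of $F=p(\rho)+\frac2d e_{\kin}(\rho,m,M)-Q$ together with $K^\Lambda\subseteq K^{co}$, and the inclusion $\supseteq$ by treating the open set $\{F<0\}$ with Lemma~\ref{lemma:admissiblesegments} and then capturing $\{F=0\}$ by closedness of $K^\Lambda$ and convexity of $F$. The one substantive difference is the lamination step: the paper applies Lemma~\ref{lemma:admissiblesegments} once, writing $z$ as a single $\Lambda$-convex combination of two states, whereas you iterate the splitting until the leaves accumulate on $L_{\rho,r}\subset K$. Your instinct here is sound --- the two endpoints produced by one application of the lemma lie in $\interior(L_{\rho,r})^{co}$, not in $K$, so the iteration (or an equivalent maximum-principle argument for $\Lambda$-convex functions) is genuinely what is needed to conclude. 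To make your ``drift towards $r$'' claim airtight, use the exact identity $\tfrac12|m+\bar m|^2+\tfrac12|m-\bar m|^2=|m|^2+|\bar m|^2$: the average of $|m|^2$ over the leaves (weighted by the laminate coefficients) is nondecreasing and bounded above by $r^2$, hence the weighted average of $|\bar m|^2$ tends to zero, and the lower bound $|\bar m|\geq\frac{C}{r}\left(r^2-|m|^2\right)$ then forces $|m|^2\to r^2$ in measure with respect to the weights; the equality case of Lemma~\ref{lemma:ekinProperties}(2) pins down $M$, and the limiting laminate is supported on $L_{\rho,r}$ with barycenter $z$, giving $z\in K^\Lambda$ by Jensen's inequality for $\Lambda$-convex functions. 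Your explicit verification that the splitting direction lies in $\Lambda$ (choosing $\eta\perp a-b$ and $\tau=-(a\cdot\eta)/\rho$) is correct and is a welcome concretization of the paper's determinant criterion.
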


	\begin{proof}
	Let us denote the function
	\begin{equation}
	F:\R^+\times\R^d\times S_0^d\times\R\ni z=(\rho,m,M,Q)\mapsto  p(\rho)+\frac 2d e_{kin}(\rho,m,M)-Q, 
	\end{equation}
	which clearly is convex. Since $F\restriction_K=0$, then by virtue of convexity and the inclusion $K^\Lambda\subset K^{co}$, $F\restriction_{K^\Lambda}\leq0$, which yields the first inclusion of the statement.

		For the other inclusion, consider first a state for which $F<0$, i.e., $Q > p(\rho)+\frac 2d e_{kin}(\rho,m,M)$. Then, there exists $r>0$ such that $Q=p(\rho)+\frac1d\frac{r^2}{\rho}$ and $r>\sqrt{2\rho e_{kin}(\rho,m,M)}$. According to Lemma~\ref{lemma:ekinProperties}, $(m,M)\in(L_{\rho, r})^{co}$, so that by (the proof of) Lemma~\ref{lemma:admissiblesegments} there exist vectors $m_1, m_2$ with $|m_1|=|m_2|=r$ such that $(\rho,m,M,Q)$ is a convex combination of two states whose difference $(\rho,m_1,m_1\circ m_1,Q)-(\rho,m_2,m_2\circ m_2,Q)$ (with $|m_1|=|m_2|$) has zero determinant and thus lies in the wave cone. This shows 
		\begin{equation}
		K^\Lambda\supset\{F<0\}.
		\end{equation}
		But since $K^\Lambda$ is closed (relatively in $\R^+\times\R^{N-1}$), it must even contain the closure of $\{F<0\}$, which is precisely $\{F\leq 0\}$ as $F$ is convex and $\{F<0\}\neq \emptyset$. 
	\end{proof}

	\subsection{Functional Setup}
	
We will construct our wild solutions to the Euler system upon perturbing subsolutions. Below we define the functional analytic framework in which we shall work as well as all the necessary notions.
	
	\begin{definition}
		Let $P(\rho)$ be the pressure potential defined by
		\begin{equation}
		P(\rho):= \rho \int^\rho_0\frac{p(r)}{r^2}\d r. 
		\end{equation}
	\end{definition}
	Clearly if $p(\rho)$ is strictly convex, then so is $P(\rho)$. 
	The total energy density for the Euler system is given by the sum of the kinetic energy density and the pressure potential
	\begin{equation}
	\frac{1}{2}\frac{|m|^2}{\rho}+P(\rho).
	\end{equation}
	
	For simplicity we shall consider a pressure law of the form $p(\rho):= \rho^\gamma$, for some $\gamma>1$. Then $P(\rho)= \frac{1}{\gamma-1}\rho^\gamma$.

	\medskip

	Let $\Omega\subset\R^d$ be open and let the energy profile
	\begin{equation}
	\bar e\in C((0,T)\times\bar\Omega) \cap C([0,T];L^1{\cap L^2}(\Omega))
	\end{equation}
	be given.

	\begin{definition}[Subsolutions]
		A subsolution to~\eqref{eq:compressEuler} with respect to the generalized pressure $Q\in C((0,T)\times\bar\Omega) \cap C([0,T];L^1{\cap L^2}(\Omega))$ is a quadruple $(\rho,m,M,Q)$ on $[0,T]\times\R^d$ into $\R^+\times\R^d\times S_0^d\times\R^+$ such that
		$$
		\rho\in C([0,T];L_w^\gamma(\R^d)),\;\; m\in C([0,T];L_w^2(\R^d)),\;\; M\in L^1((0,T)\times \Omega;S_0^d),
		$$
		and 
		\begin{equation}\label{eq:linearinmatrixform}
		\diverg_{(t,x)}
		\begin{pmatrix}
		m & M + QI_d \\
		\rho & m
		\end{pmatrix}
		=0,
		\end{equation}
		in the sense of distributions with 
		\begin{equation}
		\begin{aligned}
				Q &\geq p(\rho) + \tfrac 2d e_{\kin}(\rho,m,M)
		\end{aligned}
		\end{equation}
		on $(0,T)\times\Omega$.
	\end{definition}

	Now assume there exists a \emph{strict} subsolution $z_0 = (\rho_0, m_0, M_0, Q_0)$, that is: It satisfies~\eqref{eq:linearinmatrixform} in the sense of distributions in $(0,T)\times\R^d$, and
	\begin{itemize}
		\item $z_0(t,x)\in K$ for almost every $x\notin\Omega$;
		\item $z_0$ is continuous on $(0,T)\times\Omega$;
		\item  $Q_0 > p(\rho_0) + \tfrac 2d e_{\kin}(\rho_0,m_0,M_0)$ \; on $(0,T)\times\Omega$.
	\end{itemize}

	\begin{definition}[Approximating space of subsolutions]
		For $Q$ given as above and $z_0$ a bounded strict subsolution, let $X_0$ be the space of those 
		\begin{equation}
		(\rho,m)\in C([0,T];L_w^\gamma(\R^d))\times C([0,T];L_w^2(\R^d))
		\end{equation}
		{with $\supp{m(t,\cdot)}\subset\subset\Omega$ for every $t\in[0,T]$ and}
		for which $(\rho,m)(t,x)=(\rho_0,m_0)(t,x)$ for $t=0$ and $t=T$ for almost every $x\in\R^d$, and there is $M\in L^1((0,T)\times\Omega;S_0^d)$ such that $(\rho,m,M,Q)$ is a subsolution which agrees with $z_0$ almost everywhere outside $\Omega$, is continuous on $[0,T]\times\Omega$, and satisfies
		\begin{equation}
		 Q > p(\rho) + \tfrac 2d e_{\kin}(\rho,m,M)>p(\rho)+\frac{|m|^2}{d\rho}  \;\;\;\text{on}\;\;\;(0,T)\times\Omega.
		\end{equation}		
	\end{definition}
	
	\begin{prop}
		Let $X$ be the closure of $X_0$ with respect to the topology of $C([0,T];L_w^\gamma(\R^d))\times C([0,T];L_w^2(\R^d))$. Let $d_X$ denote a metric inducing this topology on $X$. Then $(X,d_X)$ is a complete metric space.
	\end{prop}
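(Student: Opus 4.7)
The plan is to verify three facts in sequence: (i) every element of $X$ is a uniformly bounded trajectory in $L^\infty(0,T;L^\gamma(\R^d))\times L^\infty(0,T;L^2(\R^d))$; (ii) on such bounded sets the topology of $C([0,T];L_w^\gamma)\times C([0,T];L_w^2)$ is metrizable, yielding $d_X$; and (iii) Cauchy sequences converge in the ambient space, with limits lying in $X$ because $X$ is a topological closure.

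For (i), I would exploit that the generalized pressure $Q$ is fixed across $X_0$. Any $(\rho,m)\in X_0$ comes with an $M$ such that $(\rho,m,M,Q)$ is a subsolution, hence $p(\rho)+\tfrac 2d e_{\kin}(\rho,m,M)\le Q$ pointwise on $(0,T)\times\Omega$. Combined with $\tfrac12|m|^2/\rho\le e_{\kin}$ from Lemma~\ref{lemma:ekinProperties}(2), this forces $\rho^\gamma\le Q$ and $|m|^2\le d\rho Q$ on $\Omega$; outside $\Omega$ the pair coincides with the bounded strict subsolution $z_0$. Integrating in $x$ and applying H\"older together with $Q\in C([0,T];L^1\cap L^2)$ supplies a constant $C$ such that $\|\rho(t)\|_{L^\gamma(\R^d)},\|m(t)\|_{L^2(\R^d)}\le C$ uniformly in $t\in[0,T]$ and in the choice of element of $X_0$. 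Weak lower semicontinuity of these norms extends the bound to the closure $X$.

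Step (ii) is classical. Since $L^{\gamma'}(\R^d)$ and $L^2(\R^d)$ are separable, fix countable dense subsets $\{\phi_k\}\subset L^{\gamma'}$ and $\{\psi_k\}\subset L^2$ and define
\begin{equation}
d_X\bigl((\rho,m),(\tilde\rho,\tilde m)\bigr) := \sum_{k\ge 1} 2^{-k}\,\frac{\sup_t|\langle\rho(t)-\tilde\rho(t),\phi_k\rangle|}{1+\sup_t|\langle\rho(t)-\tilde\rho(t),\phi_k\rangle|} + \sum_{k\ge 1} 2^{-k}\,\frac{\sup_t|\langle m(t)-\tilde m(t),\psi_k\rangle|}{1+\sup_t|\langle m(t)-\tilde m(t),\psi_k\rangle|}.
\end{equation}
The uniform bound from (i) guarantees that $d_X$ induces the inherited topology of $C([0,T]; L_w^\gamma)\times C([0,T]; L_w^2)$ on $X$.

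For (iii), let $(\rho_n,m_n)$ be $d_X$-Cauchy. For each $\phi\in L^{\gamma'}$ the scalar functions $t\mapsto\langle\rho_n(t),\phi\rangle$ are uniformly Cauchy in $C([0,T])$, so they converge uniformly to some continuous $g_\phi$; a pointwise-in-$t$ application of the Banach--Alaoglu theorem together with uniqueness of weak limits identifies $\rho(t)\in L^\gamma(\R^d)$ with $\langle\rho(t),\phi\rangle=g_\phi(t)$, and continuity of $t\mapsto\langle\rho(t),\phi\rangle$ is inherited from the uniform limit of continuous scalar maps, so $\rho\in C([0,T];L_w^\gamma)$; the same argument yields $m\in C([0,T];L_w^2)$. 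Hence $(\rho_n,m_n)\to(\rho,m)$ in the ambient topology, so also in $d_X$, and because $X$ is by definition a topological closure, the limit lies in $X$. The main obstacle is really just the uniform bound in step (i); once that is secured, metrizability and the limiting argument are then routine facts about reflexive, separable Banach spaces.
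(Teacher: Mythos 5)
Your proposal is correct and follows essentially the same route as the paper: a uniform bound on $\|\rho(t)\|_{L^\gamma}$ and $\|m(t)\|_{L^2}$ extracted from the pointwise subsolution inequality $Q\ge p(\rho)+\tfrac{|m|^2}{d\rho}$, metrizability of the weak topology on the resulting bounded set $B$, and completeness of $X$ as a closed subset of the complete space $C([0,T];B)$. The only (minor) divergence is in bounding $\int_\Omega \rho Q\,\d x$: the paper splits into $\{\rho\le 1\}$, where $\rho Q\le Q\in L^1$, and $\{\rho>1\}$, where $\rho\le\rho^\gamma\le Q$ gives $\rho Q\le Q^2\in L^1$, an argument valid for every $\gamma>1$, whereas your appeal to H\"older would need $Q\in L^{\gamma'}(\Omega)$ and so is only immediate from $Q\in L^1\cap L^2$ when $\gamma\ge 2$.
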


	\begin{proof}
		Using the definition of a subsolution and assumed regularity of the generalized pressure $Q$, we can see that for any $(\rho,m)\in X_0$ we have
		\begin{equation}
			\int_{\R^d} \rho^\gamma(t,x) \d x \leq \int_{\R^d\setminus\Omega} \rho_0^\gamma \d x + \int_{\Omega} Q(t,x) \d x, 
		\end{equation}
		and 
		\begin{equation}
			\begin{aligned}
			\int_{\R^d} |m(t,x)|^2 \d x = \int_{\Omega} \rho d \frac{|m(t,x)|^2}{\rho d} \leq 				d \int_{\{\rho\leq1\}} Q(t,x) \d x + d \int_{\{\rho>1\}} Q^2 \d x,
			\end{aligned}
		\end{equation}		
		for every $t\in[0,T]$. 
		Therefore $X_0$ comprises maps $(\rho,m)$ from the time interval $[0,T]$ into a bounded subset, say $B$, of $L^\gamma_w(\R^d)\times L^2_w(\R^d)$, which we can assume to be weakly closed without loss of generality. Therefore the weak topology on $B$ is metrizable, giving rise to a comlete metric space $(B,d_B)$. The space $X$ can be seen as the closure of $X_0$ in the complete metric space $C([0,T];B)$ with the metric $d_X(f_1,f_2) = \sup_{t\in[0,T]}\;d_B(f_1(t),f_2(t))$. Thus $(X,d_X)$ is itself a complete metric space. 
	\end{proof}

	We now define functionals designed to measure how much a subsolution differs from being an actual solution of the Euler equations. 
	
	\begin{definition}
		\label{def:EnergyFunctionals}
		For $\eps>0$ and any open bounded set $\Omega_0\subset \Omega$ we define the functional
		\begin{equation}
		\begin{aligned}
				I^{\eps ,\Omega_0}(\rho,m)&:= \inf_{t\in [\eps, T-\eps ]}\int_{\Omega_0} \left[ p(\rho)+ \frac{|m|^2}{d\rho}-Q  \right]  \d x.
		\end{aligned}    
		\end{equation}
		
	\end{definition}
	\noindent Clearly the functional defined above is bounded from below on $X$. Further, we have the following two important properties.
	
	\begin{lemma}\label{lemma:lsc}
		The functional $I^{\eps ,\Omega_0}$  is lower-semicontinuous on $X$.
	\end{lemma}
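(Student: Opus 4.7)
The plan is to split the lower-semicontinuity of $I^{\eps,\Omega_0}$ into two parts: first a fixed-time weak lower-semicontinuity of the spatial integral, and then a compactness/extraction argument on the interval $[\eps,T-\eps]$ to handle the infimum. For fixed $t$, introduce the spatial functional
\begin{equation}
    J(t,\rho,m):=\int_{\Omega_0}\left[p(\rho(t,x))+\frac{|m(t,x)|^2}{d\rho(t,x)}-Q(t,x)\right]dx.
\end{equation}
The integrand decomposes into $\rho\mapsto\rho^\gamma$, which is convex on $[0,\infty)$, the jointly convex nonnegative function $(\rho,m)\mapsto|m|^2/\rho$ on $(0,\infty)\times\R^d$ (extended lower-semicontinuously by $+\infty$ when $\rho=0$ and $m\neq 0$, and by $0$ at the origin), and the $(\rho,m)$-independent term $-Q$. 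Standard results on integral functionals of normal convex integrands then give that $(\rho(t),m(t))\mapsto J(t,\rho,m)$ is lower-semicontinuous with respect to weak convergence in $L^\gamma(\Omega_0)\times L^2(\Omega_0)$.

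Now, given a sequence $(\rho_n,m_n)\to(\rho,m)$ in $X$, for each $n$ I pick a near-minimizer $t_n\in[\eps,T-\eps]$ with $J(t_n,\rho_n,m_n)\leq I^{\eps,\Omega_0}(\rho_n,m_n)+1/n$, and by compactness extract a subsequence with $t_n\to t^\ast\in[\eps,T-\eps]$. Since convergence in $C([0,T];L^\gamma_w(\R^d)\times L^2_w(\R^d))$ is uniform in $t$, and since the limit curve $t\mapsto(\rho(t),m(t))$ is continuous into the weak topology, I obtain the joint convergence $(\rho_n(t_n),m_n(t_n))\toweak(\rho(t^\ast),m(t^\ast))$ in $L^\gamma(\R^d)\times L^2(\R^d)$. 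Using $Q\in C([0,T];L^1\cap L^2(\Omega))$ and boundedness of $\Omega_0$, the term $t\mapsto\int_{\Omega_0}Q(t,x)\,dx$ is continuous in $t$, so applying the fixed-time lower-semicontinuity at $t^\ast$ yields
\begin{equation}
    \liminf_n J(t_n,\rho_n,m_n)\geq J(t^\ast,\rho,m)\geq I^{\eps,\Omega_0}(\rho,m),
\end{equation}
and passing to the limit in the near-minimizing inequality gives $\liminf_n I^{\eps,\Omega_0}(\rho_n,m_n)\geq I^{\eps,\Omega_0}(\rho,m)$, as required.

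The main obstacle is the interaction between the infimum in $t$ and the weak topology in $(\rho,m)$: for a general element of $X$, the function $t\mapsto J(t,\rho,m)$ is only lower-semicontinuous (not continuous), so the infimum need not be attained and one cannot simply pass a minimizer to the limit; working with $1/n$-approximate minimizers sidesteps this cleanly while still allowing the compactness of $[\eps,T-\eps]$ to produce a limit time $t^\ast$. A secondary technical point is to confirm that the weak lower-semicontinuity of the convex integrand holds on the full weakly closed domain without a uniform positive lower bound on $\rho$ in the limit; this is handled by using the lower-semicontinuous convex envelope of $|m|^2/\rho$ described above, so that the degenerate fibre $\rho=0$ is benign unless $m\neq 0$ (in which case it is penalised by $+\infty$).
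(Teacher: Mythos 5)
Your proof is correct and takes essentially the same route as the paper's: both reduce the claim to weak lower-semicontinuity of the convex integrand at a single time obtained by extracting a convergent sequence of (near-)minimizing times $t_n\to t^\ast$ from the compact interval $[\eps,T-\eps]$, using that convergence in $X$ combined with $t_n\to t^\ast$ yields $(\rho_n(t_n),m_n(t_n))\toweak(\rho(t^\ast),m(t^\ast))$. The only differences are cosmetic: you argue directly with $1/n$-approximate minimizers where the paper argues by contradiction, and you add some (welcome but, in this setting with $\rho\geq\underline{\rho}>0$, not strictly necessary) care about the degenerate fibre $\rho=0$.
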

	
	\begin{proof}
		
		Suppose to the contrary that there are $(\rho_k,m_k)$ and $(\rho,m)$ such that $(\rho_k,m_k)\overset{d_X}{\to} (\rho,m)$, but $\lim\limits_{k\to\infty} I^{\eps ,\Omega_0}(\rho_k,m_k)< I^{\eps ,\Omega_0}(\rho,m)$.
		Thus 
		\begin{equation}
		\lim_{k\to\infty}\; \inf_{t\in [\eps, T-\eps ]}\int_{\Omega_0} \left[ p(\rho_k)+ \frac{|m_k|^2}{d\rho_k}-Q  \right]  \d x
		<
		\inf_{t\in [\eps, T-\eps ]}\int_{\Omega_0} \left[ p(\rho)+ \frac{|m|^2}{d\rho}-Q   \right]  \d x,  
		\end{equation}
		and so we can find a sequence of times $t_k\to t_0$ such that
		\begin{equation}
		\lim_{k\to\infty} \int_{\Omega_0} \left[ p(\rho_k(t_k,x))+ \frac{|m_k(t_k,x)|^2}{d\rho_k(t_k,x)}-Q  \right]  \d x
		<
		\int_{\Omega_0} \left[ p(\rho(t_0,x))+ \frac{|m(t_0,x)|^2}{d\rho(t_0,x)}-Q   \right]  \d x.  
		\end{equation}
		Since we have convergence in $X$, we also have
		\begin{equation}
		\begin{aligned}
		\rho_k(t_k,\cdot) &\rightharpoonup \rho(t_0,\cdot) &&\mathrm{in}\quad L^\gamma(\R^d)\quad \mathrm{weakly},\\ 
		m_k(t_k,\cdot) &\rightharpoonup m(t_0,\cdot) &&\mathrm{in}\quad L^2(\R^d)\quad \mathrm{weakly}.
		\end{aligned}
		\end{equation}
		Hence, by convexity
		\begin{equation}
		\liminf_{k\to \infty}  \int_{\Omega_0} \left[ p(\rho_k(t_k,x))+ \frac{|m_k(t_k,x)|^2}{d\rho_k(t_k,x)}-Q  \right]  \d x \ge \\ \int_{\Omega_0} \left[ p(\rho_k(t_0,x))+ \frac{|m_k(t_0,x)|^2}{d\rho_k(t_0,x)}-Q   \right]  \d x.
		\end{equation}
		This contradiction completes the proof.
	\end{proof}
		
	\begin{lemma}\label{lemma:I=0}
		We have that $I^{\eps,\Omega_0} \le 0$ on $X$. 
		If $I^{\eps,\Omega_0}(\rho,m)=0$ for every $\eps>0$ and every bounded open $\Omega_0\subset \Omega$, then $(\rho,m)$ is a weak solution of the compressible Euler equations~\eqref{eq:compressEuler} such that
		\begin{equation}\label{eq:regularity}
		\rho\in C([0,T];L_w^\gamma(\R^d)), \quad  m\in C([0,T];L_w^2(\R^d)),
		\end{equation}
		and 
		\begin{align}\label{eq:timeendpoints}
		&(\rho,m)(t,x) = (\rho_0,m_0)(t,x)\;\;\;\text{for}\;\;\;t=0,T\;\;\;\text{and\;\;a.e.}\;\;x\in\R^d,\\[1em]\label{eq:eKinonOmega}
		& p(\rho(t,x)) +\frac{|m(t,x)|^2}{\rho(t,x)d}=Q (t,x)
		\;\;\;\text{for every}\;\;\; t\in (0,T)\;\;\;\text{and\;\; a.e.}\;\;x\in \Omega.\\[1em]
				\end{align}
	\end{lemma}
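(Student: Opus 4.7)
The plan has two parts. For the first assertion, observe that any $(\rho,m)\in X_0$ satisfies the strict subsolution bound $p(\rho)+\tfrac{|m|^2}{d\rho}<Q$ pointwise on $(0,T)\times\Omega$, whence $I^{\eps,\Omega_0}<0$ on $X_0$; the weaker bound $I^{\eps,\Omega_0}\le 0$ then extends to the closure $X$ by the lower semicontinuity established in Lemma~\ref{lemma:lsc}.

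For the second assertion, fix $(\rho,m)\in X$ with $I^{\eps,\Omega_0}(\rho,m)=0$ for every admissible pair $(\eps,\Omega_0)$, and select an approximating sequence $(\rho_k,m_k)\in X_0$ converging in $d_X$, with associated matrices $M_k$. First I upgrade the functional vanishing to the pointwise energy identity $Q=p(\rho)+\tfrac{|m|^2}{d\rho}$ a.e.\ on $(0,T)\times\Omega$: since $\Phi(\rho,m):=p(\rho)+\tfrac{|m|^2}{d\rho}$ is convex and $\Phi(\rho_k,m_k)<Q$ pointwise, weak lower semicontinuity yields $\Phi(\rho,m)\le Q$ a.e.; combining with $I^{\eps,\Omega_0}(\rho,m)=0$ (an infimum of nonpositive quantities that vanishes forces every value to be zero) gives $\int_{\Omega_0}[\Phi(\rho,m)-Q]\d x=0$ for every $t\in[\eps,T-\eps]$, hence the pointwise equality, which proves~\eqref{eq:eKinonOmega}. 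The endpoint condition~\eqref{eq:timeendpoints} is inherited from $X_0$ by continuity of point evaluations in the topology of $C([0,T];L^\gamma_w\times L^2_w)$, and the regularity~\eqref{eq:regularity} is built into the ambient space $X$.

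To derive the Euler momentum equation I squeeze the subsolution inequality $\tfrac{|m_k|^2}{2\rho_k}\le e_{\kin}(\rho_k,m_k,M_k)\le \tfrac{d}{2}(Q-p(\rho_k))$. Integration and passage to the limit show that both extremes converge to the common integral $\int\tfrac{|m|^2}{2\rho}$: the right-hand side by weak convergence and the energy identity, and the left-hand side by strong convergence of $(\rho_k,m_k)$ in $L^\gamma\times L^2$, which itself follows from weak convergence plus $\int\Phi(\rho_k,m_k)\to\int\Phi(\rho,m)$ together with strict convexity of $\Phi$. Consequently $e_{\kin}(\rho_k,m_k,M_k)-\tfrac{|m_k|^2}{2\rho_k}\to 0$ in $L^1_{\loc}$. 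A quantitative version of the equality case of Lemma~\ref{lemma:ekinProperties}(2), namely $|M-\tfrac{m\circ m}{\rho}|_\infty\le 2(d-1)\bigl(e_{\kin}(\rho,m,M)-\tfrac{|m|^2}{2\rho}\bigr)$, obtained by applying the spectral identity $d\lambda_{\max}(A)-\trace(A)=\sum_i(\lambda_{\max}(A)-\lambda_i(A))$ to $A=\tfrac{m\otimes m}{\rho}-M$, combined with the strong convergence, then yields $M_k\to\tfrac{m\circ m}{\rho}$ in $L^1_{\loc}$. Passing to the limit in the linear equation $\partial_t m_k+\diverg M_k+\nabla Q=0$ and using the algebraic identity $\tfrac{m\circ m}{\rho}+QI_d=\tfrac{m\otimes m}{\rho}+p(\rho)I_d$ then produces the Euler momentum equation on $(0,T)\times\R^d$; the continuity equation passes trivially by linearity.

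The main technical obstacle is the identification of the distributional limit of $M_k$: it simultaneously requires the strong convergence of $(\rho_k,m_k)$ (from strict convexity of $\Phi$ and convergence of its integral) and the quantitative degeneration estimate above, which converts the scalar defect $e_{\kin}-\tfrac{|m|^2}{2\rho}\to 0$ into $L^1_{\loc}$ control of the matrix difference $M_k-\tfrac{m_k\circ m_k}{\rho_k}$.
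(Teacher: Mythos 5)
Your proof is correct, but in the key step --- identifying the limit of the matrix fields $M_k$ --- it takes a genuinely different and heavier route than the paper. The paper only ever uses \emph{weak-$*$} convergence: the uniform bound $|M_k|_\infty\le\tfrac{2(d-1)}{d}e_{\kin}(\rho_k,m_k,M_k)$ gives $M_k\overset{*}{\rightharpoonup}M$ along a subsequence, the convex inequality $p(\rho_k)+\tfrac2d e_{\kin}(\rho_k,m_k,M_k)<Q$ passes to the limit by weak lower semicontinuity to give $p(\rho)+\tfrac2d e_{\kin}(\rho,m,M)\le Q$, and then the already-established identity $Q=p(\rho)+\tfrac{|m|^2}{d\rho}$ forces $e_{\kin}(\rho,m,M)\le\tfrac{|m|^2}{2\rho}$, so that the rigidity statement in Lemma~\ref{lemma:ekinProperties}(2) identifies $M=\tfrac{m\circ m}{\rho}$ \emph{a posteriori}, with no strong convergence needed anywhere. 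You instead prove strong convergence of $(\rho_k,m_k)$ (weak convergence plus convergence of the strictly convex integral $\int\Phi$, i.e.\ a Visintin-type argument) and convert the scalar defect $e_{\kin}-\tfrac{|m|^2}{2\rho}\to0$ into matrix control via your quantitative estimate $|M-\tfrac{m\circ m}{\rho}|_\infty\le 2(d-1)\bigl(e_{\kin}(\rho,m,M)-\tfrac{|m|^2}{2\rho}\bigr)$; I checked this estimate and it is correct (the spectral argument in fact gives the sharper constant $\tfrac{2(d-1)}{d}$, consistent with Lemma~\ref{lemma:ekinProperties}(3)). Your route buys strong $L^1_{\loc}$ convergence of $M_k$ to the identified limit, which is more information than the lemma requires, at the cost of invoking strict convexity of $\Phi$ and the passage from convergence in measure to strong $L^\gamma\times L^2$ convergence; for the latter you should say a word about equi-integrability, which here follows from the local uniform bounds $p(\rho_k)\le Q\le A$ and $|m_k|^2\le d\rho_k Q$ implicit in the subsolution constraint, and you should note that the fixed-$t$ convergences are upgraded to space-time convergence by dominated convergence. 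On the other hand, your careful derivation of the pointwise identity~\eqref{eq:eKinonOmega} from $I^{\eps,\Omega_0}=0$ (nonpositive integrand with vanishing infimum of integrals) is more explicit than the paper's terse ``due to the definition of $X$'' and is a genuine improvement in exposition.
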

	
	\begin{proof}
		Let $(\rho,m)\in X_0$. Then there is a subsolution $(\rho,m,M,Q)$ such that 
				\begin{equation}
		Q  > p(\rho)+\frac{2}{d}e_{\kin}(\rho,m,M)> p(\rho )+\frac{|m|^2}{d\rho}
		\end{equation}
		on $(0,T)\times\Omega$. 
		Hence $I^{\eps,\Omega_0} < 0$ on $X_0$, and by the definition of $X$ and lower-semicontinuity we have $I^{\eps,\Omega_0} \le 0$ on $X$.
		
		Now suppose that $I^{\eps,\Omega_0}(\rho,m)=0$ for some $(\rho,m)\in X$. Then ~\eqref{eq:regularity},~\eqref{eq:timeendpoints}, and~\eqref{eq:eKinonOmega} are satisfied due to the definition of $X$.
		
		Choose $(\rho_k,m_k)\in X_0$ such that 
		$(\rho_k,m_k) \overset{d_X}{\to}(\rho,m)$ and let $(M_k)$ be the associated sequence of matrix fields $M_k$. We then have, by Lemma~\ref{lemma:ekinProperties}, 
		\begin{equation}
		|M_k|_\infty < \frac{2(d-1)}{d}e_{\kin}(\rho_k,m_k,M_k) <(d-1)\bar e 
		\end{equation}
		in $[0,T]\times\Omega$, while $M_k=M_0$ for $x$ outside $\Omega$.
		Therefore we can assume that $M_k\overset{*}{\rightharpoonup}M$ in $L^\infty((0,T)\times\R^d)$. 
		Furthermore, we have
		\begin{equation}
		\begin{aligned}
		\rho_k &\to \rho &&\mathrm{in}\quad C([0,T];L_w^\gamma(\R^d)),\\
		m_k &\to m &&\mathrm{in}\quad C([0,T];L_w^2(\R^d)),
		\end{aligned}
		\end{equation}
		so that the linear equations 
		\begin{align}
		\partial_t m + \diverg_x M +\nabla_x Q &= 0,\\ 
		\partial_t\rho +\diverg_x m &=0
		\end{align}
		hold in the sense of distributions. On $(0,T)\times\Omega$ we have for each $k$
		\begin{equation}
		\begin{aligned}
				p(\rho_k) + \frac2d e_{\kin}(\rho_k,m_k,M_k) < Q.
		\end{aligned}
		\end{equation}
		Therefore, since $p$ and $e_{\kin}$ are convex, in the limit we have
		\begin{equation}
		\begin{aligned}
				p(\rho) + \frac2d e_{\kin}(\rho,m,M) \leq Q.
		\end{aligned}
		\end{equation}
		Therefore, using~\eqref{eq:eKinonOmega}, we have
		\begin{equation}
		e_{\kin}(\rho,m,M) \leq \frac{|m|^2}{2\rho},
		\end{equation}
		which implies, by Lemma~\ref{lemma:ekinProperties}, that $e_{\kin}(\rho,m,M) = \frac{|m|^2}{2\rho}$, and so
		\begin{equation}
		M = \frac{m\circ m}{\rho}.
		\end{equation}
		Consequently, using~\eqref{eq:eKinonOmega} we conclude also
		\begin{equation}
		Q = p(\rho) + \frac{|m|^2}{\rho d}.
		\end{equation}
		Note that outside of $\Omega$, the last two equalities are satisfied trivially, since $(\rho,m,M,Q)$ agrees there with $z_0$.
		Therefore, by virtue of Lemma~\ref{lemma:reformulation}, it follows that $(\rho,m)$ is a weak solution of~\eqref{eq:compressEuler} on $(0,T)\times\R^d$.
	\end{proof}

	\subsection{The ``Gluing'' Procedure}

	The strategy for constructing solutions to the Euler system is to start with a strict subsolution, for which the values of the ``energy functionals'' defined in Definition~\ref{def:EnergyFunctionals} are strictly negative, and add highly oscillatory perturbations to increase their value. Iterating this procedure will eventually lead to an element of the space $X$ whose value of the functional $I$ is zero, i.e., a solution to the Euler equations.
	Importantly, we need to guarantee that, at each step, the perturbed subsolution will stay in the space $X_0$. To do so we will want to use plane-wave solutions to \eqref{eq:relaxedsystem} as perturbations.
	However, different perturbations will be needed, depending on space and time. Therefore we want some form of local wave solutions. 
	To generate local perturbations we will take wave solutions to the relaxed linear system~\eqref{eq:relaxedsystem} and ``glue'' them together. To localise a function smoothly we could naively just multiply a plane-wave solution by a smooth cut-off function. However, such a product would no longer be a solution to~\eqref{eq:relaxedsystem}. 
	Instead, we can use a potential operator for the relaxed linear system, so that if we cut-off the candidate function for our solution and then apply the potential operator, this will create a smooth localised solution to \eqref{eq:relaxedsystem}. In what follows we first show that such a potential operator exists, and then prove that the error made by applying the cut-off in the potential space can be controlled.
	
	We recall the following tool from~\cite[Proposition 4]{admissible}:
	
	\begin{proposition}[Potential for constant density] 
		\label{prop:potentialsdens}
		There exists a homogeneous linear differential operator 
		\begin{equation}
		{\mathcal{L}_{dens}}: C^\infty((0,T)\times\R^d)\to C^\infty((0,T)\times \R^d; \R^d\times S_0^d)
		\end{equation}
		such that the following holds: ${\mathcal{L}_{dens}}\Phi$ satisfies
		\begin{equation}\label{relaxedconstdens}
		\diverg_{(t,x)} \begin{pmatrix}
		m & M \\
		0 & m
		\end{pmatrix}
		=0
		\end{equation}
		for each smooth $\Phi\in C^\infty((0,T)\times\R^d)$, and for every plane wave solution $U$ of~\eqref{eq:relaxedsystem} there exists $\Phi$ such that $ {\mathcal{L}_{dens}}\Phi=U$. 
	
	\end{proposition}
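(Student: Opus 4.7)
This is a direct analogue of Proposition~4 in~\cite{admissible}, and the plan is to transfer its proof almost verbatim to the present setting.

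The first step is to analyze plane-wave solutions of \eqref{relaxedconstdens}. For a direction $\xi = (\tau,k) \in \R \times \R^d$ with $(\tau,k) \neq 0$, the plane wave $(m, M) = (\bar m, \bar M)h((t,x)\cdot\xi)$ solves \eqref{relaxedconstdens} precisely when $\bar M \in S_0^d$, $k \cdot \bar m = 0$, and $\tau \bar m + \bar M k = 0$. These algebraic constraints pick out exactly the constant-density sector of the wave cone of the full relaxed system \eqref{eq:relaxedsystem}, and it is on this sector that $\mathcal{L}_{dens}$ must be surjective.

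The second step is to write down an explicit third-order differential operator $\mathcal{L}_{dens}$ from scalars to pairs $(m,M) \in \R^d \times S_0^d$, designed so that its image automatically lies in the kernel of \eqref{relaxedconstdens}. A natural ansatz is
\begin{equation}
M[\Phi] = \partial_t\!\left(\nabla_x^2 \Phi - \tfrac{1}{d}\Delta_x \Phi\, I_d\right),
\end{equation}
which is manifestly symmetric and trace-free, coupled with a definition of $m[\Phi]$ in terms of third-order derivatives of $\Phi$ chosen so that $\diverg_x m = 0$ and $\partial_t m + \diverg_x M = 0$ hold as polynomial identities in derivatives. A direct computation would then verify the claimed block-matrix identity.

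The third step is to check surjectivity onto plane-wave solutions. Given $U = \bar z h((t,x)\cdot\xi)$ with $\bar z = (\bar m, \bar M)$ in the wave cone, one sets $\Phi(t,x) = \phi((t,x)\cdot\xi)$ with $\phi$ a thrice-iterated antiderivative of $h$, and verifies by an explicit symbol computation at $\xi$ that $\mathcal{L}_{dens}\Phi = U$; the key point is that the symbol of $\mathcal{L}_{dens}$ at any admissible frequency $\xi$ hits the plane-wave direction singled out by $\xi$.

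The main obstacle is the simultaneous enforcement of three rigid structural constraints -- $m$ divergence-free in $x$, $M$ symmetric and trace-free, and $\partial_t m + \diverg_x M = 0$ -- on the output of a single scalar potential. This rigidity is precisely what forces the construction to be of third order and prevents simpler curl-type ansätze from working in general dimension; the corresponding difficulty in the incompressible case is what makes the construction in~\cite{admissible} nontrivial in the first place.
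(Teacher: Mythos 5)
First, note that the paper does not prove this proposition at all: it is quoted directly from \cite[Proposition 4]{admissible}, the constant-density block system \eqref{relaxedconstdens} being exactly the incompressible linear system treated there. Your steps 1 and 3 describe the correct overall strategy (identify the algebraic constraints on plane-wave amplitudes, build an operator whose image automatically satisfies the linear system, check that the symbol reproduces the waves), but step 2 contains a genuine gap. The concrete ansatz does not close: with $M[\Phi]=\partial_t\bigl(\nabla_x^2\Phi-\tfrac1d\Delta_x\Phi\,I_d\bigr)$ one computes $\diverg_x M[\Phi]=\tfrac{d-1}{d}\,\partial_t\nabla_x\Delta_x\Phi$, so the equation $\partial_t m+\diverg_x M=0$ forces $m[\Phi]=-\tfrac{d-1}{d}\nabla_x\Delta_x\Phi$ up to a time-independent field, and then $\diverg_x m[\Phi]=-\tfrac{d-1}{d}\Delta_x^2\Phi$, which does not vanish. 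No choice of third-order $m[\Phi]$ repairs this for that choice of $M[\Phi]$, so the ``direct computation'' you invoke would fail.

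More structurally, no single operator acting on \emph{scalar} potentials can be surjective onto the plane-wave solutions once $d\ge3$: for a fixed admissible direction $\xi=(\tau,k)$ with $k\neq0$, the space of amplitudes $(\bar m,\bar M)\in\R^d\times S_0^d$ satisfying $k\cdot\bar m=0$ and $\tau\bar m+\bar Mk=0$ has dimension $\tfrac{d(d+1)}{2}-2$, which equals $1$ only for $d=2$, whereas a scalar potential produces a one-parameter family of waves per direction; your step 3 implicitly assumes there is a single ``plane-wave direction singled out by $\xi$.'' What \cite[Proposition 4]{admissible} actually provides --- and all that is needed in Section 3, where only the finitely many waves with amplitude $(0,a-b,a\otimes a-b\otimes b,0)$ per iteration step must be localized --- is an operator constructed \emph{for each given wave state} $\bar z\in\Lambda$ and its direction $\xi$. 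The construction there regards the system as the row-wise divergence of the symmetric field $\left(\begin{smallmatrix} M & m\\ m^T & 0\end{smallmatrix}\right)$, represents divergence-free symmetric matrix fields by a second-order double-curl (Saint-Venant) potential, and spends one further derivative to enforce the vanishing corner entry and the trace normalization; this is where the order $3$ arises, and it is precisely the step your sketch elides.
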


Note that the following lemma applies in particular to $\mathcal L_{dens}$. 
	
	\begin{lemma}\label{lem: difference}
		Let $\mathcal{L}$ be an $l$-homogeneous linear differential operator with constant coefficients and let $\Phi, U$ be smooth periodic functions such that $\mathcal{L} \Phi =U$. Then, for any $n\in\N$ and any smooth scalar function $\chi$, we have
		\begin{equation}\label{eq:Linfinityerrorcontrol}
		\bigg\| \chi U(nx)- \mathcal L\left( \chi \frac{1}{n^l}\Phi(nx)\right ) \bigg\|_{L^\infty} 
		\le C(\mathcal L,  \|\Phi\|_{C^l},  \|\chi\|_{C^l} ) \frac{1}{n}.
		\end{equation}
	\end{lemma}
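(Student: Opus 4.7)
The plan is to expand $\mathcal L$ via Leibniz and separate the contribution in which all derivatives hit the high-frequency factor $\Phi(n\,\cdot\,)$ from the contribution in which at least one derivative falls on the slow factor $\chi$. Write $\mathcal L=\sum_{|\alpha|=l}c_\alpha\partial^\alpha$ with constant coefficients $c_\alpha$. Apply the Leibniz rule termwise to obtain
\begin{equation}
\mathcal L\bigl(\chi(x)\Phi(nx)\bigr)=\sum_{|\alpha|=l}c_\alpha\sum_{\beta\le\alpha}\binom{\alpha}{\beta}\partial^\beta\chi(x)\,n^{|\alpha-\beta|}(\partial^{\alpha-\beta}\Phi)(nx).
\end{equation}

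The term $\beta=0$ reassembles exactly into $\chi(x)\,n^l(\mathcal L\Phi)(nx)=\chi(x)\,n^l U(nx)$, since $\mathcal L$ has constant coefficients and commutes with the rescaling $x\mapsto nx$ up to the factor $n^l$ coming from $l$-homogeneity. The remaining terms, collected into a remainder $R_n(x)$, all satisfy $|\beta|\ge 1$, hence $|\alpha-\beta|\le l-1$, and therefore carry at most the prefactor $n^{l-1}$.

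Dividing by $n^l$ gives
\begin{equation}
\mathcal L\!\left(\chi(x)\,\frac{1}{n^l}\Phi(nx)\right)=\chi(x)U(nx)+\frac{R_n(x)}{n^l},
\end{equation}
so that
\begin{equation}
\chi(x)U(nx)-\mathcal L\!\left(\chi(x)\,\frac{1}{n^l}\Phi(nx)\right)=-\frac{R_n(x)}{n^l}.
\end{equation}
The estimate $\|R_n\|_{L^\infty}\le C(\mathcal L,\|\Phi\|_{C^l},\|\chi\|_{C^l})\,n^{l-1}$ is immediate by bounding each summand with $|\beta|\ge 1$ by $\|\partial^\beta\chi\|_{L^\infty}\|\partial^{\alpha-\beta}\Phi\|_{L^\infty}\,n^{|\alpha-\beta|}$ and taking suprema; the constant absorbs the binomial coefficients and the $c_\alpha$. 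Dividing by $n^l$ gives the claimed $C/n$ bound.

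There is no real obstacle here; the only thing to be careful about is to check that the $\beta=0$ contribution indeed reconstitutes $\chi\cdot n^l U(nx)$, which uses precisely the two hypotheses that $\mathcal L$ has constant coefficients (so no cross-terms appear from differentiating coefficients) and is $l$-homogeneous (so the chain rule produces the uniform factor $n^l$ rather than a mixture of powers). Periodicity of $\Phi$ plays no role in the pointwise estimate and is only relevant insofar as it guarantees that $\|\Phi\|_{C^l}$ is finite on $\R^d$.
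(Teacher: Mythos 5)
Your proof is correct and takes essentially the same route as the paper: both expand $\mathcal L(\chi\,n^{-l}\Phi(n\cdot))$ by the Leibniz rule, identify the term with all derivatives on the fast factor as $\chi\, U(nx)$, and bound the remaining commutator terms, each of which carries at least one derivative on $\chi$ and hence at most a factor $n^{l-1}$ before the division by $n^l$.
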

	\begin{proof}
		We see from definition of $l$-homogeneity and an application of the chain rule that
		\begin{align}
		\bigg\| \chi U(nx)&- \mathcal L\left( \chi \frac{1}{n^l}\Phi(nx)\right ) \bigg\|_{L^\infty} 
		= \left\|\chi \mathcal L\left(  \frac{1}{n^l}\Phi(nx)\right )- \mathcal L\left(\chi  \frac{1}{n^l}\Phi(nx)\right )  \right\|_{L^\infty}
		\\
		&=\left\| \sum_{|\alpha|+|\beta|=l, |\beta|\neq 0} L^{\alpha,\beta}\frac{1}{n^{l-|\alpha|}} \partial_{\beta} \chi \partial_\alpha  \Phi(n x ) \right\|_{L^\infty}\le C(\mathcal L, \|\Phi\|_{C^l},  \|\chi\|_{C^l} ) \frac{1}{n},
		\end{align}
		where $L^{\alpha,\beta}$ are the appropriate coefficients depending (only) on the operator $\mathcal L$.
		\end{proof}

	\section{Perturbation Property}\label{sec:perturb}

	The cornerstone of the convex integration method, and the main ingredient towards the proof of our main result, is the following perturbation property.
	
	\begin{proposition}[Perturbation property]\label{prop:PertuabationProperty}
		Let $\Omega_0\subset\Omega$ and $\eps >0$ be given. For all $\alpha >0$ there exists $\beta>0$, possibly depending on $\eps$ and $\Omega_0$, such that, whenever $(\rho,m)\in X_0$ with 
		\begin{equation}
		I^{\eps,\Omega_0}(\rho,m)<-\alpha, 
		\end{equation}
		then there exists a sequence $\{\rho_k,m_k\}\subset X_0$ with $(\rho_k,m_k)\overset{d_X}{\to}(\rho,m)$
		and 
		\begin{equation}
		\liminf_{k\to \infty} I^{\eps ,\Omega_0}(\rho_k,m_k) \ge I^{\eps ,\Omega_0}(\rho,m) +\beta.
		\end{equation}
	\end{proposition}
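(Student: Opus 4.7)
The plan is to follow the De Lellis--Sz\'ekelyhidi template in the spirit of \cite[Proposition~3]{admissible}, but using the momentum-oscillation geometry provided by Lemma~\ref{lemma:admissiblesegments} and the potential operator $\mathcal L_{dens}$ from Proposition~\ref{prop:potentialsdens}, so that the density $\rho$ and the generalised pressure $Q$ are left untouched throughout. Since $(\rho,m,M,Q)\in X_0$ is a \emph{strict} subsolution, the pointwise gap $g := Q - p(\rho) - \tfrac{2}{d} e_{\kin}(\rho,m,M)$ is strictly positive and continuous on $(0,T)\times\Omega$, and in particular the point $(m,M)$ lies in the \emph{interior} of $(L_{\rho,r})^{co}$ with $r=\sqrt{d\rho(Q-p(\rho))}$, so Lemma~\ref{lemma:admissiblesegments} applies pointwise.

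First, I would carry out the construction locally. Fix a time $t_0\in[\eps,T-\eps]$ and a point $x_0\in\Omega_0$ where the gap $Q - p(\rho) - |m|^2/(d\rho)$ is strictly positive. On a small space-time neighbourhood $U$ of $(t_0,x_0)$ the subsolution is approximately constant, and Lemma~\ref{lemma:admissiblesegments} furnishes a wave-cone direction $\bar z=(0,\bar m,\bar M,0)$ with
\begin{equation}
|\bar m|\geq \frac{C}{r}\bigl(r^2-|m|^2\bigr) = \frac{C\rho d}{r}\Bigl(Q-p(\rho)-\tfrac{|m|^2}{\rho d}\Bigr),
\end{equation}
such that $[z-\bar z,z+\bar z]\subset K^\Lambda$ remains in the interior. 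Using Proposition~\ref{prop:potentialsdens}, lift the corresponding plane wave to a potential $\Phi$, pick a smooth oscillatory profile $h$ of mean zero with $\langle h^2\rangle=\tfrac12$, and a smooth cut-off $\chi_U$ supported in $U$. Following the gluing procedure described in the text, the localised perturbation
\begin{equation}
(\delta m_n,\delta M_n) := \mathcal L_{dens}\Bigl(\chi_U \tfrac{1}{n}\Phi(n\,\cdot)\Bigr)
\end{equation}
is an \emph{exact} solution of the linear system~\eqref{relaxedconstdens} with zero density component, and by Lemma~\ref{lem: difference} it differs from $\chi_U(\bar m,\bar M)h(n\,\cdot)$ by $O(1/n)$ in $L^\infty$.

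Next, I would cover the compact slab $[\eps,T-\eps]\times\overline{\Omega_0}$ by finitely many such neighbourhoods $U_j$ with a subordinate partition of unity, sum the corresponding perturbations to produce $(\delta m_n,\delta M_n)$ globally supported in $[\eps/2,T-\eps/2]\times\Omega$, and set $(\rho_n,m_n,M_n,Q):=(\rho,m+\delta m_n,M+\delta M_n,Q)$. Choosing the oscillation frequency $n$ large enough (and the supports of the $\chi_{U_j}$ small enough relative to the pointwise size of the gap $g$) one checks: (i) $(\rho_n,m_n,M_n,Q)$ satisfies~\eqref{eq:linearinmatrixform} in the distributional sense; (ii) the strict subsolution inequalities $Q> p(\rho_n)+\tfrac{2}{d}e_{\kin}(\rho_n,m_n,M_n)>p(\rho_n)+|m_n|^2/(d\rho_n)$ persist, using the fact that the unperturbed segment lies in the interior of $K^\Lambda$ together with the $L^\infty$-smallness of the cut-off error from Lemma~\ref{lem: difference}; and (iii) $(\rho_n,m_n)\to(\rho,m)$ in $d_X$, since $\delta m_n\rightharpoonup 0$ in $L^2$ by the Riemann--Lebesgue lemma. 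Thus $(\rho_n,m_n)\in X_0$.

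The last and main obstacle is the quantitative improvement $\liminf_n I^{\eps,\Omega_0}(\rho_n,m_n)\ge I^{\eps,\Omega_0}(\rho,m)+\beta$ with $\beta=\beta(\alpha)$ independent of the particular subsolution. Since $h$ oscillates with mean zero and $\langle h^2\rangle=\tfrac12$, weak-$L^2$ convergence yields
\begin{equation}
\int_{\Omega_0}\frac{|m_n(t,x)|^2}{d\rho(t,x)}\,\d x \longrightarrow \int_{\Omega_0}\frac{|m(t,x)|^2}{d\rho(t,x)}\,\d x + \frac12\int_{\Omega_0}\frac{|\widehat{\bar m}(t,x)|^2}{d\rho(t,x)}\,\d x,
\end{equation}
where $\widehat{\bar m}$ is the glued-together amplitude field, uniformly in $t\in[\eps,T-\eps]$ (this uniformity comes from the equi-continuity inherent in the $C([0,T];L_w^2)$-setting). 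By the pointwise bound on $|\bar m|$ together with the Cauchy--Schwarz inequality
\begin{equation}
\Bigl(\int_{\Omega_0}\tfrac{r^2-|m|^2}{d\rho}\,\d x\Bigr)^2 \leq \int_{\Omega_0}\tfrac{(r^2-|m|^2)^2}{r^2\rho d}\,\d x\cdot\int_{\Omega_0}\tfrac{r^2}{\rho d}\,\d x,
\end{equation}
and the hypothesis $\int_{\Omega_0}(Q-p(\rho)-|m|^2/(d\rho))\,\d x\ge\alpha$ together with the a priori bound on $\int_{\Omega_0}(Q-p(\rho))\,\d x$ (which is controlled uniformly on $X_0$ by the proposition preceding Definition~\ref{def:EnergyFunctionals}), one obtains a lower bound of the form $\beta\ge c\alpha^2$ with a constant depending only on $\Omega_0,\eps,Q$ and the a priori bounds defining $X$. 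This is the delicate step, since one must ensure that the losses from the cut-off, from the finite covering, and from passing to an inf-over-time are all absorbable into a fixed constant fraction of the principal $\alpha^2$-gain, which is what forces the supports of $\chi_{U_j}$ and the frequency $n$ to be chosen in the right order.
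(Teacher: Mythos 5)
Your overall strategy coincides with the paper's: perturb only the momentum along the admissible segments of Lemma~\ref{lemma:admissiblesegments}, localize the plane waves with the potential operator of Proposition~\ref{prop:potentialsdens} and control the cut-off error via Lemma~\ref{lem: difference}, compute the kinetic-energy gain through the Young measure generated by the oscillations, and convert the pointwise amplitude bound $|\bar m|\gtrsim (r^2-|m|^2)/r$ into $\beta\sim\alpha^2$ by Cauchy--Schwarz. The gap is in the localization step: ``cover $[\eps,T-\eps]\times\overline{\Omega_0}$ by finitely many neighbourhoods $U_j$ with a subordinate partition of unity'' is precisely the point where the paper's \emph{shifted grid} is indispensable, and you have not supplied a substitute for it.

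Two things fail with an overlapping cover. First, admissibility: where several $\chi_{U_j}$ overlap, the perturbed state at $(t,x)$ is $z+\sum_j\chi_{U_j}\bar z_j\,h(n(t,x)\cdot\xi_j)$, and since the profiles attached to distinct directions $\xi_j$ take the values $\pm1$ essentially independently of one another, this is a sum of several oscillation vectors with arbitrary signs. Lemma~\ref{lemma:admissiblesegments} only guarantees that the single segment $[z-\bar z_j,\,z+\bar z_j]$ stays in the interior of the hull; it says nothing about such sums, so your step (ii) does not follow. The paper avoids this by choosing cut-offs $\varphi_{i,\zeta}$ whose supports $C_{i,\zeta}$ are essentially pairwise disjoint, so that pointwise the perturbed state is $z+\varphi_{i,\zeta}\,h(\cdot)\,\bar z_{i,\zeta}$, which lies on one admissible segment because $|\varphi_{i,\zeta}h|\le1$; the remaining $O(1/n)$ error from Lemma~\ref{lem: difference} is absorbed using \eqref{eq:continuitycontrols}. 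Second, the time-uniform gain: once the supports are disjoint the cut-offs cannot sum to $1$, and with a single unshifted grid there are times $t$ (near the common endpoints of the time intervals) at which every $\varphi_{i,\zeta}(t,\cdot)$ is small, so the kinetic-energy gain at those times is negligible and the infimum over $t$ in $I^{\eps,\Omega_0}$ records no improvement at all. Your appeal to ``equi-continuity inherent in the $C([0,T];L^2_w)$-setting'' does not address this: the uniformity in $t$ of the Young-measure convergence is Lemma~\ref{YMgeneration} (which uses that $\xi$ is not parallel to $e_t$), whereas the cut-off problem is combinatorial. The staggering of the time intervals between even and odd $|\zeta|$ guarantees that at \emph{every} $t$ one of the two families $\Omega^h_1,\Omega^h_2$, each of measure a fixed fraction of $|\Omega_0|$, lies where the cut-off is identically $1$; combined with \eqref{eq:Functionalcontrols} this is what yields a $\beta$ depending only on $\alpha$, $\eps$ and $\Omega_0$. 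Without this device, or an equivalent one, your argument does not close.
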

	
	The remainder of this section is dedicated to proving this result. The proof is rather lengthy, so for clarity it is split into several subsections.

	\subsection{The Form of the Perturbations}\label{pertform}
	
	For a point $z=(\rho,m,M,Q)\in\R^N$ let us write
	\begin{equation}
	U:=
	\begin{pmatrix}
	m & M + QI_d \\
	\rho  & m
	\end{pmatrix},
	\end{equation}
	so that system \eqref{eq:relaxedsystem} can be written as
	\begin{equation}
	\diverg_{(t,x)}U=0.
	\end{equation}
	
	As mentioned before, we will seek to use plane-wave solutions of the form 
	\begin{equation}
	\bar z h(n [(t,x)\cdot \xi] ),
	\end{equation}
	where $n\in\N$ is the iteration step, $\xi\neq0$ is a direction in $\R\times\R^d$, $\bar z\in\R^N$, and the profile $h:\R\to\R$ will be chosen later to be a periodic function of small period (thus making the wave oscillate with high frequency) and zero mean.
	For this ansatz to solve the equation we need
	\begin{equation}
	\diverg_{(t,x)}\left(\bar z h(n[ (t,x)\cdot \xi])\right)=n h'\left(n[ (t,x)\cdot \xi] \right)\bar z\xi =0,
	\end{equation}
	and so we must have $\bar z \xi =0$. If $\bar\rho=\bar Q=0$, using Proposition \ref{prop:potentialsdens} we know that there exists a function $\Phi$ such that $\mathcal L_{dens}\Phi=\bar z h(n [(t,x)\cdot \xi] )$. 

	Let us now describe the particular form of perturbations in momentum which we will use below to obtain the perturbation property.
	We will naturally oscillate in the manner described before in the construction of admissible oscillation segments. Thus, we have $\bar z$ of the form 
	
	\begin{equation}
	\bar z = (0,\bar m, \bar M, 0),
	\end{equation}
	so that the corresponding matrix $U$ has the form
	\begin{equation}
	\begin{pmatrix}
	a-b & a\otimes a -b \otimes b \\
	0 &a-b 
	\end{pmatrix},
	\end{equation}
	for some $a,b\in \R^d$ with $|a|=|b|$ and $a\neq \pm b$. For $d\ge 2$ this matrix has zero determinant and so there must exist a non-zero $\xi\in\R^{d+1}$ such that $ \bar z\xi =0$.  
	Further, notice that because $a\neq \pm b$, $\xi$ is not parallel to the time direction, $e_t$. Indeed, if it were, then $\bar z \xi $ would be a scalar multiple of $(a_1-b_1,a_2-b_2,\cdots, a_d-b_d,0)$, which is non-zero. 
	
	As a profile function for these oscillations we will take
	\begin{equation}
	\bar h(x)=\begin{cases}
	-1 & \mathrm{for} \quad x\in [0,\frac{1}{2}),\\
	1 & \mathrm{for} \quad x\in [\frac{1}{2},1),
	\end{cases}
	\end{equation}
	and then define $h$ by taking the periodic extension of $\bar h$, and mollify with a very small parameter $\eps_0$ that will go to zero with the oscillation frequency; we ignore $\eps_0$ in the sequel. 
	This choice is smooth and has mean zero, giving equal weight to $m-\bar m$ and $m+\bar m$.

	\medskip
	
	Thus we obtain the desired perturbations of the form
	\begin{equation}
	\bar z h(n[(t,x)\cdot \xi]).
	\end{equation}
	
	We will now prove a straightforward weak convergence lemma that will play a role similar to Lemma $7$ in \cite{admissible}.
	
	\begin{lemma}\label{YMgeneration}
		Let $h\in L^\infty(\R;\R)$ be a bounded $L$-periodic function and $\xi\in \R^{d+1}$ be a vector not parallel to the time direction $e_t$. Then the sequence $\{h(n(t,x)\cdot \xi)\}_{n\in\N}$ generates the Young measure 
		\begin{equation}
		\nu=\frac 1L dx\restriction_{[0,L)}\circ h^{-1}
		\end{equation}
		uniformly in $t$, more precisely: For every $f\in C(\R;\R)$ and every $\phi\in L^1(\R^d;\R)$, we have
			\begin{equation}
			\lim_{n\to\infty}\sup_{t\in\R}\left|\int_{\R^d}\phi(x)f(h(n(t,x)\cdot \xi))dx-\int_{\R^d}\phi(x)dx\frac1L\int_0^L f(h(s))ds\right|=0.
			\end{equation}
	\end{lemma}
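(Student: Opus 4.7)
Write $\xi = (\xi_t,\xi_x)$ with $\xi_x \in \R^d \setminus \{0\}$ (this is exactly the hypothesis that $\xi$ is not parallel to $e_t$), and set $g := f \circ h$, which is bounded and $L$-periodic since $h$ is bounded $L$-periodic and $f$ is continuous on the (compact) essential range of $h$. Let $\bar g := \tfrac{1}{L}\int_0^L g(s)\, ds$. My task is to show
\begin{equation}
\sup_{t \in \R}\,\Big|\,\int_{\R^d} \phi(x)\,g\bigl(n\xi_t\, t + n\,\xi_x \cdot x\bigr)\, dx - \bar g \int_{\R^d} \phi(x)\, dx\,\Big| \longrightarrow 0 \qquad (n \to \infty).
\end{equation}

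First I reduce to a one-dimensional statement. Pick an orthogonal $R \in O(d)$ with $R\xi_x = |\xi_x|\,e_1$ and set $\psi(y) := \phi(R^T y) \in L^1(\R^d)$; the change of variables $x = R^T y$ turns the integrand into $\psi(y)\,g(n\xi_t t + n|\xi_x|\,y_1)$. By Fubini, with $\tilde\psi(y_1) := \int_{\R^{d-1}} \psi(y_1, y')\, dy' \in L^1(\R)$ (and $\int_\R \tilde\psi = \int_{\R^d}\phi$), the problem reduces to
\begin{equation}
\sup_{\beta \in \R}\,\Big|\,\int_\R \tilde\psi(y)\bigl(g(\alpha y + \beta) - \bar g\bigr)\, dy\,\Big| \longrightarrow 0 \qquad (\alpha \to \infty),
\end{equation}
where I have written $\alpha := n|\xi_x|$ and $\beta := n\xi_t t$; note that as $t$ ranges over $\R$, $\beta$ ranges over $\R$, so control uniform in $\beta$ is precisely what is needed.

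The heart of the argument is an integration-by-parts trick against a bounded primitive. Since $g - \bar g$ has zero mean over one period, its antiderivative $G(s) := \int_0^s (g(u) - \bar g)\, du$ is itself $L$-periodic, hence bounded by some constant $C_g$ depending only on $g$. Assume first $\tilde\psi \in C_c^1(\R)$. Then
\begin{equation}
\int_\R \tilde\psi(y)\bigl(g(\alpha y + \beta) - \bar g\bigr) dy = \frac{1}{\alpha} \int_\R \tilde\psi(y)\,\frac{d}{dy}G(\alpha y + \beta)\, dy = -\frac{1}{\alpha}\int_\R \tilde\psi'(y)\, G(\alpha y + \beta)\, dy,
\end{equation}
whose absolute value is bounded by $\alpha^{-1} C_g \|\tilde\psi'\|_{L^1}$, uniformly in $\beta$. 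For general $\tilde\psi \in L^1(\R)$ and given $\delta > 0$, use density of $C_c^1(\R)$ in $L^1(\R)$ to pick $\tilde\psi_\delta \in C_c^1(\R)$ with $\|\tilde\psi - \tilde\psi_\delta\|_{L^1} < \delta / (2(\|g\|_\infty + |\bar g|))$; the error introduced by this replacement is bounded by $\delta/2$ uniformly in $\alpha, \beta$, and for $\alpha$ large enough the previous step handles $\tilde\psi_\delta$ with error $< \delta/2$.

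The main (minor) obstacle is the uniformity in $t$: a naive Riemann--Lebesgue argument in Fourier would give pointwise convergence but the sup over $t$ could be subtle. The integration-by-parts route circumvents this cleanly, because the $t$-dependence enters only through the translation $\beta$ inside the $L$-periodic $G$, and $\|G\|_\infty$ is translation-invariant. Thus the bound is genuinely uniform in $\beta$, and hence in $t$.
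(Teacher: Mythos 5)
Your proof is correct, and it takes a genuinely different route from the paper's. The paper keeps the problem in $\R^d$ and handles the time dependence by reducing $\xi_0 t$ modulo the period $L/n$ of $h(n\,\cdot)$: the residual shift $\alpha_n(t)\in[0,L/n)$ is converted into a spatial translation of the test function, $\phi\bigl(\cdot-\alpha_n(t)\xi'/|\xi'|^2\bigr)\to\phi$ in $L^1$ uniformly in $t$, after which the classical result of Ball--Murat on Young measures generated by $\{h(nx\cdot\xi')\}$ finishes the argument. You instead rotate and apply Fubini to reduce to one dimension and then prove the oscillation statement from scratch by pairing against the bounded $L$-periodic primitive $G$ of $g-\bar g$, with a density argument in $L^1$. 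What your approach buys is self-containedness (no appeal to the Ball--Murat lemma -- in effect you reprove it) and a quantitative rate $\mathrm{O}(1/\alpha)$ for $C_c^1$ test functions, with uniformity in $t$ coming for free since the shift $\beta$ enters only through the translation-invariant $\|G\|_\infty$; what the paper's approach buys is brevity, since the only new observation needed beyond the cited literature is that the time shift is small modulo the shrinking period. One cosmetic remark: the rotation step is not strictly necessary -- one can integrate by parts directly in the direction $\xi'/|\xi'|$ -- but it is harmless and makes the reduction transparent.
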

	
	\begin{proof}
	Since $f\circ h$ is bounded and $L$-periodic if $h$ is, we may assume without loss of generality $f$ to be the identity function. By assumption, we can write $\xi=(\xi_0,\xi')$ with $\xi'\neq 0$. For given $t\in\R$ and $n\in\N$, denote by $\alpha_n(t)$ the value of $\xi_0 t$ modulo $\frac Ln$, that is, 	
	\begin{equation}
	\alpha_n(t):= \min\left\{\xi_0 t-k\frac Ln: \quad k\in\N,\quad k\frac Ln\leq \xi_0t\right\}\in\left[0,\frac Ln\right).
	\end{equation}
	In particular, $\alpha_n(t)\to 0$ as $n\to\infty$, uniformly in $t\in\R$.
	
	Then, denoting $h_n:=h(n\cdot)$ and noting that $h_n$ is $\frac Ln$-periodic, we can compute
	\begin{equation}
	\begin{aligned}
	\int_{\R^d}\phi(x)h(n(t,x)\cdot \xi)dx&=\int_{\R^d}\phi(x)h_n(t\xi_0+x\cdot\xi')dx\\
	& =\int_{\R^d}\phi(x)h_n(\alpha_n(t)+x\cdot\xi')dx\\
	& =\int_{\R^d}\phi(x)h_n\left(\left(x+\alpha_n(t)\frac{\xi'}{|\xi'|^2}\right)\cdot\xi'\right)dx\\
	& =\int_{\R^d}\phi\left(x-\alpha_n(t)\frac{\xi'}{|\xi'|^2}\right)h_n(x\cdot\xi')dx.
	\end{aligned}
	\end{equation}
	It is well-known (see~\cite{ballmurat}) that the sequence $\{h_n(x\cdot\xi')\}_{n\in\N}$ generates the Young measure $\frac 1L dx\restriction_{0,L}\circ h^{-1}$. Moreover, $\phi\left(\cdot-\alpha_n(t)\frac{\xi'}{|\xi'|^2}\right)\to\phi$ in $L^1(\R^d)$, uniformly in $t$, whence the claim follows.

	\end{proof}

	\subsection{The Shifted Grid}
	
	On $ \R \times\R^d$ we define the grid of size $h$ exactly as in~\cite{admissible}:
	For $\zeta\in \Z^d$ and $|\zeta|:=\zeta_1+\cdots +\zeta_d$, let $Q_\zeta,\tilde Q_\zeta $ be cubes on $\R^d$ centered at $\zeta h$ with side length $h$ and $\tfrac{3}{4} h$, respectively, so that
	\begin{equation}
	Q_\zeta:= \zeta h+ \left[ -\tfrac{h}{2},\tfrac{h}{2}\right]^d,\quad \tilde Q_\zeta:= \zeta h+ \left[ -\tfrac{3h}{8},\tfrac{3h}{8}\right]^d.
	\end{equation}
	For every $(i,\zeta)\in \Z\times \Z^d$ let 
	\begin{equation}
	C_{i,\zeta}=\begin{cases}
	[ih,(i+1)h]\times Q_\zeta  & \text{if} \;\;|\zeta|\;\; \text{is even},\\
	[(i-\tfrac{1}{2})h,(i+\tfrac{1}{2})h] \times Q_\zeta  & \text{if} \;\;|\zeta|\;\; \text{is odd}.
	\end{cases}
	\end{equation}
	Let further $0\le \varphi\le 1$ be a smooth cutoff function on $\R \times\R^d$, with support in $[-\tfrac{h}{2},\tfrac{h}{2}]^{d+1}$, identically $1$ on $[-\tfrac{3h}{8},\tfrac{3h}{8}]^{d+1}$, and strictly smaller that 1 outside. Let $\varphi_{i,\zeta}$  be the translation of $\varphi$ to the associated $C_{i,\zeta}$ and let $\phi_h$ denote the sum of all $\varphi_{i,\zeta}$. 
	
	Given an open bounded $\Omega_0$, let 
	\begin{equation}
	\Omega_1^h=  \bigcup\; \left\{\tilde Q_\zeta\;\colon\;\;\; |\zeta|\; \text{even,}\;\;\; Q_\zeta \subset \Omega_0  \right\}, \quad \Omega_2^h= \bigcup\; \left\{\tilde Q_\zeta\;\colon\;\;\; |\zeta|\; \text{odd,}\;\;\; Q_\zeta \subset \Omega_0  \right\}.
	\end{equation}
	Note that, for $s=1,2$, 
	\begin{equation}
	\lim_{h\to 0} |\Omega^h_s| =\frac{1}{2}\left(\frac{3}{4}\right)^d |\Omega_0|, 
	\end{equation}
	and for every $t$ the set $\{x\in \Omega_0\;\colon\; \phi_h(t,x)=1\}$ contains at least one of the sets $\Omega_s^h$. See~\cite[Figure 1]{admissible} for illustration.
	
	Now let $(\rho,m)\in X_0$ with 
	\begin{equation}
	I^{\eps,\Omega_0}(\rho,m)\le -\alpha
	\end{equation}
	for some $\alpha>0$, and let $M$ be the associated smooth matrix field solving \eqref{eq:linearinmatrixform}. Let 
	\begin{equation}
	A=\max_{[\frac{\eps}{2},T-\frac{\eps }{2}]\times \Omega_0}Q,
	\end{equation}
	and  
	further let\, $E^h: [\eps ,T-\eps ]\times \Omega_0\to \R$ be a step-function defined on the grid by 
	\begin{align}
	E^h(t,x)=E^h(ih,\zeta h) =p(\rho(ih,\zeta h))+\frac{1}{d}\frac{|m(ih,\zeta h)|^2}{\rho(ih,\zeta h)}-Q(ih,\zeta h)\quad &\text{for} \, (t,x)\in C_{i,\zeta},
		\end{align}
	which is well-defined for $h\le \eps$. Since $\rho,m,$ and $Q$ are uniformly continuous on $[\tfrac{\eps}{2},T-\tfrac{\eps}{2}]\times \Omega_0$, for any $s=1,2$ we have
	\begin{align}
	\lim_{h\to 0} \int_{\Omega^h_s} E^h(t,x)\d x &=\frac{1}{2}\left(\frac{3}{4}\right)^d \int_{\Omega_0} p(\rho(ih,\zeta h))+\frac{1}{d}\frac{|m(ih,\zeta h)|^2}{\rho(ih,\zeta h)}-\bar e(ih,\zeta h)\d x, 
		\end{align}
	uniformly in $t\in[\eps ,T-\eps]$. In particular there exists a dimensional constant $c>0$ such that for sufficiently small $h$ and for any $t\in [\eps, T-\eps]$, we have 
	\begin{align}\label{eq:Functionalcontrols}
	\int_{\Omega^h_s} E^h(t,x)\d x\ge c \alpha\quad &\text{whenever}\quad   \int_{\Omega_0} p(\rho(ih,\zeta h))+\frac{1}{d}\frac{|m(ih,\zeta h)|^2}{\rho(ih,\zeta h)}-Q(ih,\zeta h)\d x\le -{\alpha}.
		\end{align}
	
	For each $(i,\zeta)\in \Z\times \Z^d$ such that $C_{i,\zeta}\subset[\tfrac{\eps}{2},T-\tfrac{\eps}{2}]\times\Omega_0$ let
	\begin{equation}
	z_{i,\zeta}=(\rho(ih,\zeta h),m(ih,\zeta h),M(ih,\zeta h),Q(ih,\zeta h) ),
	\end{equation}
	and choose the momentum admissible line segment in the following way: 

	We apply Lemma \ref{lemma:admissiblesegments} to $z=z_{i,\zeta}$ with 
	\begin{equation}
	r=\sqrt{d\rho(ih,\zeta h)Q(ih,\zeta h)};
	\end{equation}
	note that, with this choice, $z_{i,\zeta}\in \interior(L_{\rho(i,\zeta),r})^{co}$ owing to Lemma~\ref{lemma:ekinProperties} and the property of $z$ being a strict subsolution. This yields the momentum segment
	\begin{equation}
	\sigma^m_{i,\zeta}=[z_{i,\zeta}-\bar z_{i,\zeta}^m,z_{i,\zeta}+\bar z_{i,\zeta}^m].
	\end{equation}

	Since $ z:=(\rho, m, M,Q )$ is uniformly continuous, for sufficiently small $h$ we have that 
	\begin{align}\label{eq:continuitycontrols}
	&p(\rho(t,x))+\frac 2d e_{\kin}(z(t,x)+Z)< Q(t,x) \quad\text{for all }\; Z\in\sigma^m_{i,\zeta} \;\text{and} \; (t,x)\in C_{i,\zeta}.
	\end{align}
	Indeed, the estimate follows since, by construction, it is satisfied at the point $(t,x)=(ih, \zeta h)$, and then one uses continuity. 
	Thus, given $\alpha>0$, we fix the grid size $0<h<\frac{\eps}{2}$ so that the estimates \eqref{eq:Functionalcontrols} and \eqref{eq:continuitycontrols} hold.

	\subsection{Adding the Perturbations}
	
	For each $(i,\zeta)$ and $j,n\in \N$ we choose the vector $\bar z_{i,\zeta}$ as in the previous subsection, as well the corresponding wave direction $\xi_{i,\zeta}\in\R^{d+1}\setminus\{0\}$, not parallel to the time direction, such that $\bar z_{i,\zeta}h((t,x)\cdot\xi_{i,\zeta})$ satisfies~\eqref{eq:relaxedsystem} for any profile function $h$. Let also $\phi_{i,\zeta}$ be the cutoff function introduced in the previous subsection, and $h$ the profile from Subsection~\ref{pertform}. 
	
	{Then from Proposition~\ref{prop:potentialsdens} we infer existence of a smooth function $\Phi_{i,\zeta}\in C^\infty((0,T)\times\R^d)$ such that $\mathcal L_{dens}\Phi = \bar z_{i,\zeta}h((t,x)\cdot\xi_{i,\zeta})$. Now we define
	
	\begin{equation}
		(0,\tilde m^j_{i,\zeta},\tilde M^j_{i,\zeta},0)(t,x):= \mathcal{L}_{dens}\left(\frac{1}{j^l}\phi_{i,\zeta}(t,x)\Phi_{i,\zeta}(jt,jx)\right),
	\end{equation}
and	
	\begin{equation}
		(0,\bar m^j_{i,\zeta},\bar M^j_{i,\zeta},0)(t,x):= \phi_{i,\zeta}(t,x)\bar  z_{i,\zeta}h(j[(t,x)\cdot \xi_{i,\zeta}]).
	\end{equation}
Then Lemma~\ref{lem: difference} gives the following estimate
	\begin{equation}
		\norm{\bar m^j_{i,\zeta} - \tilde m^j_{i,\zeta}}_{L^\infty} \leq \frac{C}{j}.
	\end{equation}
}
	
	
	Due to our choice of the function $h$ and Lemma~\ref{YMgeneration}, we readily see that the sequence $\bar m^j_{i,\zeta}$ generates the following homogeneous Young measure on the set $\{\phi_{i,\zeta}=1\}$, uniformly in time:
	\begin{align}
		\nu^m_{i,\zeta}&\coloneqq 
	\frac{1}{2}\delta_{-\bar m_{i,\zeta}}+\frac{1}{2}\delta_{\bar m_{i,\zeta}}.
	\end{align}
	
	By Lemma~\ref{lem: difference}, the same Young measure is generated by the sequence $\tilde m^j_{i,\zeta}$.

	Then for the kinetic energy density we obtain, as $j\to \infty$,
	\begin{align}
	\frac{1}{2}\frac{|m+\bar m^j_{i,\zeta}|^2}{\rho}&\stackrel{*}{\rightharpoonup}
	\int_{\R^d}\frac{1}{2}\frac{|m+y|^2}{\rho}\d \nu^m_{i,\zeta}(y)\\
	&=
	\int_{\R^d}\frac{1}{2}\frac{|m|^2+|y|^2+2m\cdot y}{\rho}\d \nu^m_{i,\zeta}(y)
	\\[0.3em]
	&=
	\frac12\frac{|m|^2}{\rho}+\frac12\frac{|\bar m_{i,\zeta}|^2}{\rho}.
	\end{align}
	
	We now want to find a lower bound for the
	extra term $\frac{|\bar m_{i,\zeta}|^2}{2\rho}$ in terms of $| E^h(t,x)|$.

	For the momentum we have, by setting $r=\sqrt{d \rho Q}$,
	\begin{equation}
	|\bar m_{i,\zeta}|^2\ge\frac{C}{r^2}|r^2-|m|^2|^2=\frac{C}{d \rho Q}
	|d\rho Q-|m|^2|^2=\frac{C\rho}{ Q}
	\left|Q-\frac{|m|^2}{d\rho}\right|^2,
	\end{equation}
	and so
	\begin{equation}
	\frac{|\bar m_{i,\zeta}|^2}{2\rho} \ge\frac{C}{ Q}
	\left|Q-\frac{|m|^2}{d\rho}\right|^2.
	\end{equation}
	Now we can bound $Q$ by $A$ and obtain
	\begin{equation}
	\label{eq:mombound}
	\frac{|\bar m_{i,\zeta}|^2}{2\rho} \ge\frac{C}{ A}
	\left|E^h(ih,\zeta h)\right|^2.
	\end{equation}

	\medskip	
	
	As $\mathcal L$ is linear, we can sum up all the perturbations over $(i,\zeta)$ to get a solution in the whole space. For each $j\in\N$ we write 
	\begin{align}
		(0,\tilde m_{j},\tilde M_{j},0)&:= \sum_{\{(i,\zeta): C_{i,\zeta}\subset[\eps,T-\eps]\times \Omega_0  \}}(0,\tilde m_{i,\zeta},\tilde M_{i,\zeta},0)\\
	(0, \bar m_{j},\bar M_{j},0)&:= \sum_{\{(i,\zeta): C_{i,\zeta}\subset[\eps,T-\eps]\times  \Omega_0 \}}(0,\bar m_{i,\zeta},\bar M_{i,\zeta},0),
	\end{align} 
	and 
	\begin{align}
	( \rho_{j}, m_j, M_j,Q)&= ( \rho, m, M, Q)+(0, \tilde{m}_j,\tilde{M}_j,0).
	\end{align}

	The above sums have a finite number of terms and thus from \eqref{eq:continuitycontrols} and \eqref{eq:Linfinityerrorcontrol}
	we can find a $k$ large enough such that $(\rho_j,m_j)\in X_0$ for all $j>k$. 
	Using~\eqref{eq:mombound}, we see that 	
	\begin{align}
	\liminf_{j\to \infty}I^{\eps,\Omega_0}(\rho_j,m_j)=& \liminf_{j\to \infty} \inf_{t\in [\eps,T-\eps]} \left\{ \int_{\Omega_0} p(\rho)+\frac{|m+\tilde m_j|^2}{d\rho}-\bar e(t,x)\d x  \right\}
	\\
	&\ge I^{\eps,\Omega_0}+\inf_{t\in [\eps,T-\eps]} \Bigg\{ \int_{\Omega_0} \sum_{s=1,2} \int_{\Omega^h_s}\sum_{\{(i,\zeta)| C_{i,\zeta}\subset[\eps,T-\eps]\times\Omega_0 \}} \frac{|\bar m_{i,\zeta}|^2}{d\rho} \d x \Bigg\}
	\\
	&\ge I^{\eps,\Omega_0}+\inf_{t\in [\eps,T-\eps]} \left\{ \int_{\Omega_0} \sum_{s=1,2}
	\int_{\Omega^h_s}\frac{C}{A}|E^h(ih,\zeta h)|^2 \d x \right\}
	\\
	&\ge I^{\eps,\Omega_0}+\inf_{t\in [\eps,T-\eps]} \left\{ \int_{\Omega_0}\frac{C}{A|\Omega_0|}\sum_{s=1,2}
	\left(\int_{\Omega^h_s}|E^h| \d x\right)^2 \right\},
	\end{align}
	where for the last inequality we have used H\"older's inequality. Using \eqref{eq:Functionalcontrols} we see that 
	\begin{equation}
	\liminf_{j\to \infty}I^{\eps,\Omega_0}(\rho_j, m_j)\ge I^{\eps,\Omega_0}(\rho, m) +\frac{C'}{A|\Omega_0|}\alpha^2,
	\end{equation}
	and set $\beta =\tfrac{C'}{A|\Omega_0|}\alpha^2$.
	Furthermore, as for $j$ large enough $(\rho_j,m_j)\in X_0$ for all $j>k$, we deduce that $(\rho_j,m_j) \overset{d_X}{\to} (\rho,m)$ as $j\to \infty$. Thus the proof of Proposition~\ref{prop:PertuabationProperty} is complete.

	\section{Statement and Proof of the Main Results}
	\label{sec: Main_Proof} 
	
	Having established the perturbation property we are ready to state and prove our main results.

		\begin{theorem}\label{prop:subsolutioncondition}
		Let $\Omega\subset\R^d$ be open and let the generalized pressure
	\begin{equation}
	Q\in C((0,T)\times\bar\Omega) \cap C([0,T];L^1\cap L^2(\Omega))
	\end{equation}
	be given.
	Further let $(\rho_0,m_0,M_0,Q)$ be a continuous solution of~\eqref{eq:relaxedsystem} in $[0,T]\times\R^d$, such that $(\rho_0,m_0,M_0,Q)\in K$ for $x\nin\Omega$ and
		\begin{align}
		Q> p(\rho_0)+\frac 2d e_{\kin}(\rho_0,m_0,M_0)
		\end{align}
		in $(0,T)\times\Omega$.
		Then the set of weak solutions of~\eqref{eq:compressEuler} that agree with $(\rho_0,m_0)$ in $\R^d\times\{0,T\}\cup(\R^d\setminus\Omega)\times(0,T)$, and such that 
		\begin{equation}
		Q=p(\rho)+\frac{|m|^2}{\rho d}
		\end{equation}
		almost everywhere in $[0,T]\times\Omega$, is residual in the space $X$ in the Baire sense. The same is true when $\R^d$ is replaced by $\T^d$.
	\end{theorem}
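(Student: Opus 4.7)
The plan is to run a Baire category argument in the complete metric space $(X, d_X)$. Fix a countable collection $\{(\eps_j, \Omega_0^j)\}_{j\in\N}$ with $\eps_j\downarrow 0$ and $\{\Omega_0^j\}$ a nested exhaustion of $\Omega$ by bounded open sets. Observe that $(\eps, \Omega_0)\mapsto I^{\eps,\Omega_0}(\rho,m)$ is monotone: enlarging $\Omega_0$ or shrinking $\eps$ makes $I$ smaller, since the integrand $p(\rho)+|m|^2/(d\rho)-Q$ is non-positive on $X$. Hence if $I^{\eps_j,\Omega_0^j}(\rho,m)=0$ for every $j$, then $I^{\eps,\Omega_0}(\rho,m)=0$ for every $\eps>0$ and every bounded open $\Omega_0\subset\Omega$, so Lemma~\ref{lemma:I=0} identifies $(\rho,m)$ as a weak solution of~\eqref{eq:compressEuler} with the prescribed boundary data and the pointwise identity $Q=p(\rho)+|m|^2/(\rho d)$ almost everywhere in $[0,T]\times\Omega$.

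It therefore suffices to show that $C := \bigcap_j \{(\rho,m)\in X : I^{\eps_j,\Omega_0^j}(\rho,m)=0\}$ is residual in $X$. By Lemma~\ref{lemma:lsc} each $I^{\eps_j,\Omega_0^j}$ is lower-semicontinuous, and it is bounded (above by $0$ via Lemma~\ref{lemma:I=0}, below because $Q\in C([0,T];L^1\cap L^2(\Omega))$). A classical Baire result then provides, for each $j$, a residual set $C_j\subset X$ of continuity points of $I^{\eps_j,\Omega_0^j}$; the countable intersection $C' := \bigcap_j C_j$ is still residual. I will show $C'\subset C$, which finishes the proof.

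Fix $(\rho,m)\in C'$ and suppose towards a contradiction that $I^{\eps_j,\Omega_0^j}(\rho,m)<-\alpha$ for some $j$ and some $\alpha>0$. Using density of $X_0$ in $X$, choose $(\rho^{(n)},m^{(n)})\in X_0$ converging to $(\rho,m)$ in $d_X$; continuity of $I^{\eps_j,\Omega_0^j}$ at $(\rho,m)$ then gives $I^{\eps_j,\Omega_0^j}(\rho^{(n)},m^{(n)})<-\alpha/2$ for all $n$ large enough. Applying Proposition~\ref{prop:PertuabationProperty} with threshold $\alpha/2$ at each such starting point and performing a standard diagonal extraction produces a sequence $(\tilde\rho^{(n)},\tilde m^{(n)})\in X_0$ converging to $(\rho,m)$ in $d_X$ whose $\liminf$ of $I^{\eps_j,\Omega_0^j}$-values is at least $I^{\eps_j,\Omega_0^j}(\rho,m)+\beta$, where $\beta=\beta(\alpha/2,\eps_j,\Omega_0^j)>0$ is independent of the starting iterate. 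Since $I^{\eps_j,\Omega_0^j}$ is continuous at $(\rho,m)$, the full sequence $I^{\eps_j,\Omega_0^j}(\tilde\rho^{(n)},\tilde m^{(n)})$ must converge to $I^{\eps_j,\Omega_0^j}(\rho,m)$, a contradiction.

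I expect the main technical subtlety to be the diagonalization in the previous paragraph: it relies crucially on the fact that the quantitative increment $\beta$ in the Perturbation Property depends only on $\alpha$, $\eps_j$, $\Omega_0^j$ and not on the subsolution itself, so that it can be applied uniformly along the approximating sequence. The construction and the Baire argument are entirely insensitive to whether the spatial domain is $\R^d$ or $\T^d$, which yields the last assertion of the theorem.
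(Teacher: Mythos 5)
Your proposal is correct and follows essentially the same route as the paper: residuality of the continuity points of the lower-semicontinuous functionals $I^{\eps_j,\Omega_0^j}$, the perturbation property plus a diagonal extraction to force $I=0$ at every continuity point, a countable intersection over an exhaustion of $\Omega$, and finally Lemma~\ref{lemma:I=0}. Your explicit monotonicity remark (reducing all pairs $(\eps,\Omega_0)$ to a countable family via non-positivity of the integrand on $X$) is a detail the paper leaves implicit, and your use of the threshold $\alpha/2$ along the approximating sequence is a slightly cleaner version of the paper's ``up to a rearrangement'' step.
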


As $\rho$ remains unchanged in the entire scheme, we also obtain the following more classical result: 

\begin{theorem}\label{prop:subsolutionconditiondens}
		Let $\Omega\subset\R^d$ be open and let the generalized pressure
	\begin{equation}
	Q\in C((0,T)\times\bar\Omega) \cap C([0,T];L^1\cap L^2(\Omega))
	\end{equation}
	be given.
	Further let $(\rho_0,m_0,M_0,Q_0)$ be a continuous solution of~\eqref{eq:relaxedsystem} in $[0,T]\times\R^d$, such that $(\rho_0,m_0,M_0,Q_0)\in K$ for $x\nin\Omega$ and
		\begin{align}
		\bar e = Q_0> p(\rho_0)+\frac 2d e_{\kin}(\rho_0,m_0,M_0)
		\end{align}
		in $(0,T)\times\Omega$.
		Then the set of weak solutions of~\eqref{eq:compressEuler} such that $m=m_0$ in $\R^d\times\{0,T\}\cup(\R^d\setminus\Omega)\times(0,T)$ and $\rho=\rho_0$ everywhere in $[0,T]\times\R^d$, and such that 
		\begin{equation}
		\bar e=Q_0=p(\rho_0)+\frac{|m|^2}{\rho_0 d}
		\end{equation}
		almost everywhere in $[0,T]\times\Omega$, is residual in the space $X\cap\{\rho=\rho_0\}$ in the Baire sense. The same is true when $\R^d$ is replaced by $\T^d$.
	\end{theorem}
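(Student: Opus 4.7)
The plan is to specialize the Baire category argument that proves Theorem~\ref{prop:subsolutioncondition} to the closed subspace $Y := X \cap \{\rho = \rho_0\}$. Since the projection $(\rho,m) \mapsto \rho$ is continuous on $X$ and $\rho_0$ is fixed, $Y$ is closed in $X$, hence itself a complete metric space with the induced metric $d_X$. The hypothesis provides a strict subsolution lying in $X_0 \cap Y$, so the latter is non-empty. The decisive observation is that the entire perturbation scheme of Section~\ref{sec:perturb} is momentum-only: every perturbation produced in the proof of Proposition~\ref{prop:PertuabationProperty} has the form $(0, \tilde m_j, \tilde M_j, 0)$, so the resulting sequence satisfies $\rho_j = \rho$ identically. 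Consequently the perturbation property transfers verbatim to $X_0 \cap Y$ inside the complete metric space $Y$: for any $(\rho_0, m) \in X_0 \cap Y$ with $I^{\eps,\Omega_0}(\rho_0,m) < -\alpha$, the constructed approximating sequence remains in $X_0 \cap Y$, with the same quantitative gain $\beta \sim \alpha^2$.

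With these ingredients the Baire argument follows the well-trodden recipe. Exhaust $\Omega$ by a countable family of open bounded sets $\Omega_k \subset\subset \Omega$ and, for each $(n,k,l) \in \N^3$, set
\[
U_{n,k,l} := \{(\rho_0, m) \in Y : I^{1/n, \Omega_k}(\rho_0, m) > -1/l\}.
\]
Each $U_{n,k,l}$ is open in $Y$ by Lemma~\ref{lemma:lsc}. For density, given $(\rho_0, m) \in Y$ and $\delta > 0$, one first approximates by an element of $X_0 \cap Y$ within $d_X$-distance $\delta/2$, and then iterates the perturbation property finitely many times to force $I^{1/n, \Omega_k}$ above $-1/l$, controlling the cumulative displacement by a geometric series in which the $k$-th approximation is chosen within $\delta/2^{k+1}$ of the previous point. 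The iteration terminates in a bounded number of steps because the deficit is non-positive and each step adds a gain at least quadratic in the current deficit. The Baire category theorem then yields that $\bigcap_{n,k,l} U_{n,k,l}$ is residual in $Y$, and by monotonicity of $I^{\eps,\Omega_0}$ in $\eps$ and $\Omega_0$ every element of this intersection satisfies $I^{\eps,\Omega_0}(\rho_0, m) = 0$ for every $\eps > 0$ and every bounded open $\Omega_0 \subset \Omega$. Lemma~\ref{lemma:I=0} promotes any such element to an exact weak solution of~\eqref{eq:compressEuler} with $Q_0 = p(\rho_0) + |m|^2/(\rho_0 d)$ almost everywhere on $(0,T) \times \Omega$ and the prescribed values on $\R^d \times \{0,T\} \cup (\R^d\setminus\Omega)\times(0,T)$.

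The main obstacle, as always in such constructions, is the density step, i.e., bookkeeping the iterative application of Proposition~\ref{prop:PertuabationProperty} so that the total $d_X$-displacement stays below $\delta$ while the deficit is driven to zero; this is essentially the argument of~\cite{admissible} and is unaffected by the extra constraint $\rho = \rho_0$ precisely because the density plays no role in the oscillations. The torus case is identical, as nothing in the scheme depends essentially on the Euclidean nature of the spatial domain.
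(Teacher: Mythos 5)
Your proposal is correct and takes essentially the same route as the paper: the paper derives Theorem~\ref{prop:subsolutionconditiondens} from precisely the observation you make, namely that every perturbation in Proposition~\ref{prop:PertuabationProperty} has the form $(0,\tilde m_j,\tilde M_j,0)$ and therefore never leaves the slice $\{\rho=\rho_0\}$, so the Baire argument for Theorem~\ref{prop:subsolutioncondition} restricts verbatim to that complete subspace and concludes via Lemma~\ref{lemma:I=0}. The only (inessential) difference is in the packaging of the Baire step: the paper uses the points of continuity of the Baire-$1$ functional $I^{\eps,\Omega_0}$ and a one-step contradiction, whereas you use a countable intersection of open sets whose density is obtained by iterating the perturbation property; both are standard equivalents.
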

	
We prove here only Theorem~\ref{prop:subsolutioncondition}, as Theorem~\ref{prop:subsolutionconditiondens} follows in an analogous way. Likewise, we omit the proof on $\T^d$, as it is completely analogous to the case of the whole space.

The proof follows the by now standard Baire category argument, as in~\cite{DLS2009}, with necessary modifications.
	
	\begin{proof}
		
		The functional $I^{\eps,\Omega_0}$ is lower-semicontinuous on the complete metric space $X$ and takes values in $\R$, and so it can be written as a pointwise supremum of countably many continuous functionals.  Therefore it is a Baire-$1$ map and hence its points of continuity from a residual set in $X$.
		
		We will now argue that $I^{\eps,\Omega_0}$ vanishes at its continuity points.
		Assume first, towards a contradiction, that there exists a $(\rho,m)\in X$ which is a point of continuity of $I^{\eps,\Omega_0}$ and $I^{\eps,\Omega_0}\le -\alpha$ for some $\alpha>0$. 
		
		 Choosing a sequence $\{(\rho_k,m_k)\}_k\subset X_0$ such that $(\rho_k,m_k)\overset{d_X}{\to}(\rho,m)$, we have $I^{\eps,\Omega_0}(\rho_k,m_k)\to I^{\eps,\Omega_0}(\rho,m)$. Then, up to a rearrangement, we may assume that $I^{\eps,\Omega_0}(\rho_k,m_k)\le -\alpha$. Using the perturbation property, Proposition~\ref{prop:PertuabationProperty}, for each pair $(\rho_k,m_k)$ and a standard diagonal argument, we find a sequence $\{\tilde\rho_k,\tilde m_k\}_k\subset X_0$ such that 
		\begin{align}
		(\tilde \rho_k,\tilde m_k)&\overset{d_X}{\to}(\rho,m) \quad \mathrm{in} \quad X,\\
		\lim_{k\to \infty}I^{\eps,\Omega_0} (\tilde \rho_k,\tilde m_k)&\ge I^{\eps,\Omega_0} (\rho, m)+\beta, 
		\end{align}  
		  for some $\beta>0$. This contradicts that $(\rho,m)$ is a point of continuity of $I^{\eps,\Omega_0}$ and thus if $(\rho,m)\in X$ is a point of continuity of $I^{\eps,\Omega_0}$, then $I^{\eps,\Omega_0}(\rho,m)=0$.
		  
		  Next, let $\Omega_k$ be an exhaustive sequence of bounded open subsets of $\Omega$. Consider the set $\Theta$ which is the intersection of 
		  \begin{equation}
		  \Theta_k:= \{ (\rho,m)\in X \; \colon \; I^{\frac{1}{k},\Omega_k} \;\; \text{is continuous at} \;\;(\rho,m)\}.
		  \end{equation}
		  Then $\Theta$ is the intersection of countably many residual sets and thus it is residual. Furthermore, if $(\rho,m)\in \Theta$, then $I^{\eps,\Omega_0}=0$ for any $\eps>0$ and any bounded $\Omega_0 \subset \Omega$. Applying Lemma \ref{lemma:I=0}, any $(\rho,m)\in \Theta$ satisfies the requirements of the theorem. 
	\end{proof}
	
	\begin{remark} 
	Our Theorem~\ref{prop:subsolutionconditiondens} can be seen as a general framework for constructing ``wild'' solutions to the compressible Euler equations as known from several recent works. To exemplify this, let us recall two important results in this direction:
	\begin{enumerate}
	\item In~\cite{chiodaroli}, the author exhibits energy-admissible solutions on the torus for any given initial density in $C^1$. In fact, her solutions are \emph{semi-stationary}, that is, their densities are time-independent. Noting that our Theorems~\ref{prop:subsolutioncondition} and~\ref{prop:subsolutionconditiondens} are equally valid on the torus, we recognise Chiodaroli's Proposition~4.1 as a special case of our Theorem~\ref{prop:subsolutionconditiondens}. Therefore, using Chiodaroli's construction of appropriate subsolutions and her argument for energy admissibility, we can recover her main result in our framework.
	\item A similar observation is true for~\cite{CDLK}, where it is proved that admissible wild solutions can emerge from certain  Riemann data. The main convex integration statement (Proposition 3.6 in~\cite{CDLK}) is easily seen to be a special case of our Theorem~\ref{prop:subsolutionconditiondens}, once we note that our result remains valid even if the subsolution is only pieceweise continuous. The energy admissibility of the solutions in~\cite{CDLK}, however, is facilitated by the fact that the specific subsolutions under consideration have piecewise constant density. 
	\end{enumerate}
	We hasten to add that the results in~\cite{chiodaroli} and~\cite{CDLK} do not immediately follow from our framework, but require in addition substantial efforts in the construction of appropriate subsolutions and the verification of the energy inequality, respectively.
	\end{remark}
		
	\section*{Appendix: A Toy Model}
	\setcounter{theorem}{0}
    \renewcommand{\thetheorem}{A\arabic{theorem}}
	
	Consider the following toy problem, which is a variant of the problem in~\cite[Section 6.1]{hprinc}: Find a pair of, say, $L^2$ functions $(u,v): [0,1]\to\R^2$ satisfying $|u|+|v|=1$ almost everywhere. There is no distributional differential constraint in this problem. 
	
	This is supposed to illustrate, in the simplest possible setting, the counter-intuitive effect that a somewhat symmetric result (in density and momentum) can arise from a completely asymmetric construction (because oscillations are added only to the momentum, not to the density) in the construction for the compressible Euler system. In our toy problem, one might think of $u$ as the velocity and $v$ as the pressure. The quantity $|u|+|v|$ can be thought of as the `total energy' or `generalized pressure', which should be 1 almost everywhere.  
	
	Set $X_0$ to be the space of (say, smooth) maps $(u,v):[0,1]\to\R^2$ that satisfy $|u|+|v|<1$ (`subsolutions'). Clearly, $X_0$ is norm-bounded in $L^2(0,1;\R^2)$, hence the weak topology in $L^2$ is metrizable by a metric $d$. Let $X$ be the $d$-closure of $X_0$, then $(X,d)$ is a complete metric space, and $|u|+|v|\leq 1$ for all $(u,v)\in X$.  
	
	Let the functional $I:X\to\R$ be given by
	\begin{equation}
	I(u,v):=\int_0^1[(|u|+|v|)^2-1]dx.
	\end{equation}
	Then (by convexity) $I$ is lower semicontinuous in $(X,d)$, and $I(u,v)\leq0$ with equality if and only if $(u,v)$ is a solution of our problem. By the Baire Theorem, the set of points of continuity of $I$ is a residual set in $X$. The following perturbation property shows that at every such point of continuity, $I=0$:
	
	\begin{prop}
	Let $\alpha>0$. Then there exists $\beta>0$ such that the following holds: For each $(u,v)\in X_0$ such that $I(u,v)<-\alpha$, there is a sequence $(u_k,v_k)\subset X_0$ converging (w.r.t.\ $d$) to $(u,v)$ which satisfies
	\begin{equation}
	\liminf_{k\to\infty}\;I(u_k,v_k)\geq I(u,v)+\beta.
	\end{equation}
	\end{prop}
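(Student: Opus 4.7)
The plan is to implement the ``momentum-only'' perturbation of Section~\ref{sec:perturb} in this simpler, constraint-free setting, where $u$ plays the role of the momentum and $v$ the role of the density. Given $(u,v)\in X_0$ with $I(u,v)<-\alpha$, I would keep $v$ fixed and oscillate $u$ at frequency $k$ between the two admissible values $u\pm\bar u$, with amplitude
\begin{equation}
\bar u(x):=\tfrac{1}{2}\bigl(1-|u(x)|-|v(x)|\bigr)>0,
\end{equation}
chosen so that the perturbed pair stays inside $X_0$. Explicitly, set $u_k:=u+\bar u\,h_k$ and $v_k:=v$, where $h_k$ is a smooth mean-zero $1/k$-periodic approximation of a $\pm1$-step function. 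A pointwise bound $|u_k|+|v|\leq|u|+\bar u+|v|=\tfrac12(1+|u|+|v|)<1$ shows $(u_k,v_k)\in X_0$, and $h_k\rightharpoonup 0$ in $L^2$ gives $(u_k,v_k)\to(u,v)$ in $(X,d)$. The mild issue that $\bar u$ may fail to be smooth at zeros of $u$ or $v$ is easily remedied by smoothing $|\cdot|$ from below (e.g.\ replacing it by $\sqrt{\,\cdot\,^2+\eta^2}-\eta$) and sending $\eta\to 0$; I expect no real obstacle here.

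Next I would compute the liminf of $I(u_k,v_k)$. The sequence $u_k$ generates, uniformly in $x$, the Young measure $\tfrac12\delta_{u+\bar u}+\tfrac12\delta_{u-\bar u}$, so
\begin{equation}
\int_0^1 u_k^2\,dx\to\int_0^1(u^2+\bar u^2)\,dx,\qquad \int_0^1|u_k||v|\,dx\to\int_0^1\max(|u|,\bar u)\,|v|\,dx\geq \int_0^1|u||v|\,dx.
\end{equation}
Expanding $(|u_k|+|v|)^2=u_k^2+2|u_k||v|+v^2$ then yields
\begin{equation}
\liminf_{k\to\infty} I(u_k,v_k)\geq I(u,v)+\int_0^1 \bar u^2\,dx,
\end{equation}
so the whole problem reduces to bounding $\int \bar u^2\,dx$ from below in terms of $\alpha$.

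This last bound is the only step that requires a moment's thought, and is the obvious analogue of the estimate~\eqref{eq:mombound}. Since $|u|+|v|\leq 1$, we factor $1-(|u|+|v|)^2=(1-|u|-|v|)(1+|u|+|v|)\leq 2(1-|u|-|v|)$, whence
\begin{equation}
\int_0^1(1-|u|-|v|)\,dx\geq \tfrac12|I(u,v)|>\alpha/2.
\end{equation}
The Cauchy--Schwarz inequality on the unit interval then gives
\begin{equation}
\int_0^1\bar u^2\,dx=\tfrac14\int_0^1(1-|u|-|v|)^2\,dx\geq \tfrac14\Bigl(\int_0^1(1-|u|-|v|)\,dx\Bigr)^{\!2}>\frac{\alpha^2}{16},
\end{equation}
so setting $\beta:=\alpha^2/16$ completes the argument. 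The quadratic dependence $\beta\sim\alpha^2$ is exactly the same phenomenon visible in the compressible scheme.
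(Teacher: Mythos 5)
Your proof is correct and follows essentially the same route as the paper's: keep $v$ fixed, perturb $u$ by a high-frequency mean-zero oscillation whose amplitude is proportional to the pointwise defect $1-|u|-|v|$, and bound the resulting gain $\int_0^1 \bar u^2\,dx$ from below by $\alpha^2/16$ via Jensen/Cauchy--Schwarz, yielding the same $\beta=\alpha^2/16$. If anything, your Young-measure treatment of the cross term $\int_0^1|u_k|\,|v|\,dx$ (via $\tfrac12(|a+b|+|a-b|)=\max(|a|,|b|)$) is more careful than the paper's expansion, which silently requires $|u_k|\geq|u|$ pointwise and really only holds in the weak limit.
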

	
	\begin{proof}
	Set $v_k=v$, $u_k(x):=u(x)+\frac12 \sin(kx)(1-(|u(x)|+|v(x)|)^2)$, then clearly $(u_k,v_k)\in X_0$, and
	\begin{equation}
	\begin{aligned}
	I(u_k,v_k)&=\int_0^1[(|u_k|+|v|)^2-1]dx\\
	&\geq  I(u,v)+\frac 14\int_0^1\sin^2(kx)[1-(|u|+|v|)^2]^2dx + \int_0^1 u(1-(|u|+|v|)^2)\sin(kx)dx.
	\end{aligned}
	\end{equation}
	The last term converges to zero and the second to
	\begin{equation}
	\frac 18\int_0^1[1-(|u|+|v|)^2]^2dx.
	\end{equation}
But this in turn can be estimated below, by Jensen's inequality, by
	\begin{equation}
	\frac 18 I(u,v)^2\geq \frac18 \alpha^2,
	\end{equation}
	so $\beta:=\frac{1}{16}\alpha^2$ does the job for sufficiently large $k$.
	\end{proof}
	
	This proves 
	
	\begin{theorem}\label{resi}
	The set of $(u,v)$ with $|u|+|v|=1$ a.e.\ is residual in $(X,d)$.
	\end{theorem}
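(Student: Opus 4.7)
The plan is to mimic the standard Baire category argument that was already used in the proof of Theorem~\ref{prop:subsolutioncondition}, but in the vastly simpler setting of the toy model. The key ingredients are all on the table: $(X,d)$ is a complete metric space, $I$ is lower semicontinuous on $X$, $I\leq 0$ on $X$ with equality characterizing solutions, and the perturbation property has just been established. So the only work is to combine them.

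First, I would observe that since $I:X\to\R$ is lower semicontinuous on the complete metric space $X$, it can be written as the pointwise supremum of a countable family of continuous functions, hence is of Baire class $1$. By the Baire Category Theorem its set of continuity points, call it $\Theta$, is residual in $X$.

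Second, I would show that $I(u,v)=0$ at every $(u,v)\in\Theta$. Suppose towards a contradiction that $I(u,v)\leq -\alpha$ for some $\alpha>0$ at some continuity point $(u,v)$. By definition of $X$ there is a sequence $(u_k,v_k)\in X_0$ with $(u_k,v_k)\xrightarrow{d}(u,v)$; by continuity of $I$ at $(u,v)$, we may assume $I(u_k,v_k)<-\alpha$ for all $k$. Applying the perturbation Proposition to each $(u_k,v_k)$ yields a sequence in $X_0$ converging to $(u_k,v_k)$ whose $\liminf$ of $I$ is at least $I(u_k,v_k)+\beta$, with $\beta=\tfrac{1}{16}\alpha^2$ independent of $k$. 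A standard diagonal extraction then produces $(\tilde u_k,\tilde v_k)\in X_0$ with $(\tilde u_k,\tilde v_k)\xrightarrow{d}(u,v)$ and
\begin{equation}
\liminf_{k\to\infty}I(\tilde u_k,\tilde v_k)\geq I(u,v)+\beta > I(u,v),
\end{equation}
contradicting continuity of $I$ at $(u,v)$. Hence $I(u,v)=0$ on $\Theta$.

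Finally, since $I(u,v)=\int_0^1[(|u|+|v|)^2-1]dx$ and $(|u|+|v|)^2-1\leq 0$ pointwise a.e.\ on $X$, the equation $I(u,v)=0$ forces $|u|+|v|=1$ almost everywhere. Therefore the residual set $\Theta$ is contained in the set of solutions, proving the theorem. The only mildly delicate point is the diagonal extraction step, but this is entirely routine given that $d$ metrizes a separable topology on the bounded set in question.
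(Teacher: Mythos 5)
Your proof is correct and is essentially the argument the paper intends: after the perturbation proposition the paper simply writes ``This proves'' Theorem~\ref{resi}, leaving implicit exactly the Baire-category argument (lower semicontinuity $\Rightarrow$ Baire-1 $\Rightarrow$ residual set of continuity points, then the perturbation property forces $I=0$ there) that you spell out, mirroring the proof of Theorem~\ref{prop:subsolutioncondition}. The only cosmetic point is that if $I(u,v)=-\alpha$ exactly you should work with $\alpha/2$ before asserting $I(u_k,v_k)<-\alpha$, but this is trivially repaired.
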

	
	\begin{remark}
	This looks extremely strange at first glance for the following reason: {\em We have used no perturbation in the $v$ component in the proof whatsoever!} Yet, the theorem seems to imply that there are infinitely many solutions such that $u$ and $v$ are both (individually!) nowhere continuous. It seems counter-intuitive that $v$, which is entirely untouched by the given perturbations, should also have this property.
	
	Does this argument hold up to scrutiny? In order to show that a Baire-residual subset of $X$ contains infinitely many elements with nowhere continuous $v$, one needs to show that $X$ contains an open set of $v$'s (if, for instance, we had defined $X_0$ to be the space of $(u,v)$ with $|u|+|v|=1$ a.e.\ and additionally $v\equiv 0$, of course this would not have been the case). For the problem treated here, it is of course true, and so it is for the compressible Euler problem, thanks to the possibility of \emph{some} perturbation to $\rho$, even if the density perturbation is not required. 
	\end{remark}

\end{document}